
\documentclass{amsart}
\usepackage{graphicx}
\usepackage{amscd}
\usepackage{amsmath}
\usepackage{amsfonts}
\usepackage{amssymb}
\newtheorem{theorem}{Theorem}
\theoremstyle{plain}

\newtheorem{corollary}{Corollary}

\newtheorem{lemma}{Lemma}

\newtheorem{remark}{Remark}

\numberwithin{equation}{section}

\begin{document}
\title[Coifman-Rochberg-Weiss commutator]{An abstract Coifman-Rochberg-Weiss commutator theorem}
\author{Joaquim Martin}
\address{Department of Mathematics\\
Universidad Autonoma de Barcelona}
\email{jmartin@mat.uab.es}
\urladdr{}
\author{Mario Milman}
\curraddr{Department of Mathematics\\
Florida Atlantic University\\
Boca Raton, Florida 33431}
\email{extrapol@bellsouth.net}
\urladdr{http://www.math.fau.edu/milman}
\thanks{2000 Mathematics Subject Classification Primary: 46E30, 26D10.}
\thanks{The first author is supported in part by MTM2007-60500 and by CURE 2005SGR 00556.}
\thanks{This paper is in final form and no version of it will be submitted for
publication elsewhere.}
\dedicatory{It is a special pleasure for us to dedicate this paper to you, our dear friend
Dan Waterman on the occasion of your 80th birthday. But that is not all.The
things we discuss here are intimately related to important work by another
dear friend, and so to you too Richard Rochberg, warmest greetings on the
occasion of your 65th birthday. We wish both of you many many more wonderful
and creative years.}

\begin{abstract}
We formulate and prove a version of the celebrated Coifman-Rochberg-Weiss
commutator theorem for the real method of interpolation
\end{abstract}\maketitle

\section{Introduction}

Commutator estimates play an important role in analysis (cf. \cite{se}). Our
starting point in this paper is the celebrated commutator theorem of
Coifman-Rochberg-Weiss \cite{crw}. Let $K$ be a Calder\'{o}n-Zygmund operator,
and let $b\in BMO(R^{n}).$ Denote by $M_{b}$ the operator ``multiplication by
$b$'', then (cf. \cite{crw})%
\begin{equation}
\left\|  \lbrack K,M_{b}]f\right\|  _{p}\leq c\left\|  b\right\|
_{BMO}\left\|  f\right\|  _{p},1<p<\infty, \label{uno}%
\end{equation}
where $[K,M_{b}]f=K(bf)-bK(f).$ Since each of the operators $f\rightarrow
K(bf)$ and $f\rightarrow bK(f)$ is unbounded on $L^{p},$ the cancellation that
results of taking their difference is essential for the validity of (\ref{uno}).

The Coifman-Rochberg-Weiss commutator theorem has found many applications in
the study of PDEs, Jacobians, Harmonic Analysis, and was also the starting
point of the Rochberg-Weiss \cite{rw} abstract theory of commutators in the
setting of scales of interpolation spaces, which itself has had many
applications (cf. \cite{ka}, \cite{ka1}, \cite{rr}, and the references therein).

It is instructive to review informally one of the proofs of (\ref{uno})
provided in \cite{crw}. Suppose that $b\in BMO,$ and fix $p>1.$ Then it is
well known that we can find $\varepsilon>0$ small enough such that, for all
$0<\alpha<\varepsilon,$ $e^{\alpha b}$ and $e^{-\alpha b}\in A_{p}$ (here
$A_{p}$ is the class of Muckenhoupt weights). Let $K$ be a CZ operator, then
$K$ is bounded on the weighted spaces $L^{p}(e^{\alpha b}),\left|
\alpha\right|  <\varepsilon.$ In other words, the family of operators
$f\rightarrow e^{\alpha b}K(e^{-\alpha b}f)$ is uniformly bounded on $L^{p}$
for $\left|  \alpha\right|  <\varepsilon.$ It follows readily that one can
extended these operators to an analytic family of operators $T(z)f=e^{zb}%
K(e^{-zb}f),$ for $\left|  z\right|  <\varepsilon,$ and then show that,
$\left.  \frac{d}{dz}T(z)f\right|  _{z=0}=\frac{1}{2}[K,M_{b}]f$ is also a
bounded operator on $L^{p}.$ In particular, it follows that, in the statement
of the theorem, we can replace CZ operators by operators $T$ with the same
weighted norm inequalities, i.e. the result holds for any operator $T,$ such
that for all weights in the $A_{p}$ class of Muckenhoupt, $T:L^{p}%
(w)\rightarrow L^{p}(w),1<p<\infty,$ boundedly.

The previous argument was the starting point of the Rochberg-Weiss \cite{rw}
theory of abstract commutator estimates for the complex method of
interpolation, later extended to the real method by these authors jointly with
Jawerth (cf. \cite{jrw}). The subject has been intensively developed in the
last 30 years (cf. the recent survey by Rochberg \cite{rr} and the references therein).

While the Rochberg-Weiss theory, when suitably specialized to weighted $L^{p}$
spaces, can be used to re-prove the Coifman-Rochberg-Weiss commutator theorem,
in this paper we consider a different problem: we give an abstract formulation
of the Coifman-Rochberg-Weiss commutator theorem which is valid for
interpolation scales themselves. Since we work with the real method, the
cancellations will be exploited via integration by parts and a suitable
re-interpretation of the relevant $BMO$ condition$.$

Before we formulate our main result let us recall some basic definitions
associated with the real method of interpolation (cf. \cite{bl} for more
details). Let $\bar{X}=(X_{0},X_{1})$ be a compatible pair of Banach spaces.
To define the real interpolation spaces\footnote{we shall only consider the
$J-$method in this note.} $(X_{0},X_{1})_{\theta,q}$ we start by considering
on $X_{0}\cap X_{1}$ the family of norms%
\[
J(t,x;\bar{X})=\max\{\left\|  x\right\|  _{X_{0}},t\left\|  x\right\|
_{X_{1}}\},t>0.
\]
Let $\theta\in(0,1),1\leq q\leq\infty.$ We consider the elements $f\in
X_{0}+X_{1},$ that can be represented by%
\[
f=\int_{0}^{\infty}u(s)\frac{ds}{s}\text{ (crucially here the convergence of
the integral is in the }X_{0}+X_{1}\text{ sense),}%
\]
where $u:(0,\infty)\rightarrow X_{0}\cap X_{1}.$ We let%
\[
\Phi_{\theta,q}(g)=\left\{  \int_{0}^{\infty}\left(  s^{-\theta}\left|
g(s)\right|  \right)  ^{q}\frac{ds}{s}\right\}  ^{1/q},
\]%
\[
\bar{X}_{\theta,q}=\{f=\int_{0}^{\infty}u(s)\frac{ds}{s}\text{ in }X_{0}%
+X_{1}:\Phi_{\theta,q}(J(s,u(s);\bar{X}))<\infty\},
\]%
\[
\left\|  f\right\|  _{\bar{X}_{\theta,q}}=\inf\{\Phi_{\theta,q}(J(s,u(s);\bar
{X})):f=\int_{0}^{\infty}u(s)\frac{ds}{s}\text{ in }X_{0}+X_{1}\}.
\]
Likewise, if $w$ is a positive function on $(0,\infty),$ we define the
corresponding spaces $\bar{X}_{\theta,q,w}$ by means of the use of the
function norm
\[
\Phi_{\theta,q,w}(g)=\Phi_{\theta,q}(wg).
\]

In this setting we consider the nonlinear operator%
\[
f\rightarrow u_{f}:(0,\infty)\rightarrow X_{0}\cap X_{1},
\]
where $u_{f}$ has been selected so that%
\begin{equation}
f=\int_{0}^{\infty}u_{f}(s)\frac{ds}{s}\text{ in }X_{0}+X_{1}, \label{intro}%
\end{equation}
and\footnote{we use $2$ for definitiness, obviously can replace $2$ by
$1+\varepsilon.$}%
\[
\Phi_{\theta,q}(J(s,u_{f}(s);\bar{X}))\leq2\left\|  f\right\|  _{\bar
{X}_{\theta,q}}.
\]
We then define
\begin{equation}
\Omega f=\Omega_{\bar{X}}f=\int_{0}^{\infty}u_{f}(s)\log s\frac{ds}{s}.
\label{dos}%
\end{equation}
The commutator theorem in this context (cf. \cite{jrw}) states that if
$T:\bar{X}\rightarrow\bar{Y}$ is a bounded linear operator, then the nonlinear
operator
\begin{align}
\left[  T,\Omega\right]  f  &  =T(\Omega_{\bar{X}}f)-\Omega_{\bar{Y}%
}(Tf)\nonumber\\
&  =\int_{0}^{\infty}(T(u_{f}(s))-u_{Tf}(s))\log s\frac{ds}{s} \label{tres}%
\end{align}
is bounded,%
\[
\left\|  \left[  T,\Omega\right]  f\right\|  _{\bar{Y}_{\theta,q}}\leq
c\left\|  T\right\|  _{\bar{X}\rightarrow\bar{Y}}\left\|  f\right\|  _{\bar
{X}_{\theta,q}}.
\]

One possible interpretation of the appearance of the logarithm in formula
(\ref{dos}) (and hence (\ref{tres})) can be given if we try to imitate the
arguments of Coifman-Rochberg-Weiss and bring into the argument analytic
functions with suitable cancellations. Indeed, if we represent the elements of
$\bar{X}_{\theta_{0},q}$ using the normalization $u_{\theta_{0}f}%
(s)=s^{\theta_{0}}u_{f}(s),$ then the elements in $\bar{X}_{\theta_{0},q}$ can
be represented by analytic functions (with appropriate control),%
\[
F(z)=\int_{0}^{\infty}s^{(z-\theta_{0})}(u_{\theta_{0}f}(s))\frac{ds}%
{s},\;F(\theta_{0})=f.
\]
In this setting we have%
\[
F^{\prime}(\theta_{0})=\Omega f.
\]

The crucial point of the cancellation argument is that, while operators
represented by derivatives of analytic functions can be unbounded (since we
may lose control of the norm estimates)$,$ the canonical representation of
$\left[  T,\Omega\right]  $%
\[
G^{\prime}(\theta_{0})=\left[  T,\Omega\right]  f,
\]
with%
\[
G(z)=\int_{0}^{\infty}s^{(z-\theta_{0})}(Tu_{\theta_{0}f}(s)-u_{\theta_{0}%
Tf}(s))\frac{ds}{s},
\]
exhibits the crucial cancellation%
\begin{align}
G(\theta_{0})  &  =\int_{0}^{\infty}(Tu_{\theta_{0}f}(s)-u_{\theta_{0}%
Tf}(s))\frac{ds}{s}\nonumber\\
&  =Tf-Tf\nonumber\\
&  =0, \label{amiga}%
\end{align}
which allows us to control the norm of $G^{\prime}(\theta_{0})$.

It is, of course, possible to eliminate all references to analytic functions,
and formulate the results in terms of representations that exhibit
cancellations. From this point of view the ``badness'' of the commutators is
expressed by the fact that their canonical representations have an extra
unbounded log factor (cf. (\ref{tres})) which would lead to the weaker
estimate%
\[
\left[  T,\Omega\right]  :\bar{X}_{\theta,q}\rightarrow\bar{Y}_{\theta
,q,\frac{1}{(1+\left|  \log s\right|  )}},\text{ (note that }\bar{Y}%
_{\theta,q}\subsetneqq\bar{Y}_{\theta,q,\frac{1}{(1+\left|  \log s\right|  )}%
}).
\]
Here is where the cancellation (\ref{amiga}), now expressed without reference
to analytic functions, simply as an integral equal to zero, comes to our
rescue and allows us to integrate by parts to find the ``better''
representation,
\begin{equation}
\left[  T,\Omega\right]  f=\int_{0}^{\infty}(\int_{0}^{t}(Tu_{\theta_{0}%
f}(s)-u_{\theta_{0}Tf}(s))\frac{ds}{s})\frac{ds}{s}, \label{buena}%
\end{equation}
which leads to the correct estimate%
\[
\left[  T,\Omega\right]  :\bar{X}_{\theta,q}\rightarrow\bar{Y}_{\theta,q}.
\]
This point of view was developed in \cite{mr}.

To formulate the Coifman-Rochberg-Weiss theorem in our setting we give a
different interpretation to the logarithm that appears in the formulae. First,
for a given weight $w$ we introduce the (possibly non linear) operators
$\Omega_{w},$ defined by%
\[
\Omega_{w}(f)=\int_{0}^{\infty}u_{f}(s)w(s)\frac{ds}{s}.
\]
It follows that for $w\in L^{\infty}(0,\infty),$ the corresponding $\Omega
_{w}$ is (trivially) a bounded operator,%
\[
\left\|  \Omega_{w}(f)\right\|  _{\bar{X}_{\theta,q}}\leq c\left\|  w\right\|
_{L^{\infty}}\left\|  f\right\|  _{\bar{X}_{\theta,q}},
\]
and therefore the corresponding commutators $\left[  T,\Omega_{w}\right]  $
are also bounded. On the other hand, for the mildly unbounded function
$w(s)=\log(s),$ we have $\Omega_{w}=\Omega,$ which is not bounded on $\bar
{X}_{\theta,q},$ but for which cancellations imply the boundedness of
commutators of the form $\left[  T,\Omega\right]  .$ Now, as is well known,
the logarithm is a typical example of a function with $BMO$ behavior.
Therefore we now ask more generally: for which weights $w$ can we assert that
for all bounded linear operators $T:\bar{X}\rightarrow\bar{Y},$ we have that
$\left[  T,\Omega_{w}\right]  $ is a bounded operator as well? The answer to
this question is what we shall call ``the abstract Coifman-Rochberg-Weiss theorem.''

Not surprisingly the answer is given in terms of a suitable $BMO$ type space
which allows us to control the oscillations of $w.$ Let $Pw(t)=\frac{1}{t}%
\int_{0}^{t}w(s)ds$ and define%
\[
w^{\#}(t)=Pw(t)-w(t)=\frac{1}{t}\int_{0}^{t}w(s)ds-w(t)=\frac{1}{t}\int
_{0}^{t}\left(  w(s)-w(t)\right)  ds.
\]
Then we consider the following analog\footnote{For martingales it can be
explicitly shown, by means of selecting appropriate sigma fields (cf.
\cite{dmy}), that $W$ is a $BMO$ martingale space. $W$ has also appeared
before in several papers on interpolation theory (cf. \cite{cws},
\cite{bmr1}).} of $BMO(R_{+})$ introduced in \cite{ms}:%
\[
W=\{w:w^{\#}(t)\in L^{\infty}(0,\infty)\},\text{ \ with \ }\left\|  w\right\|
_{W}=\left\|  Pw-w\right\|  _{L^{\infty}}.
\]
There is a direct connection between $W$ and the space $L(\infty,\infty)$ of
Bennett-DeVore-Sharpley \cite{bds}:%
\[
w\in L(\infty,\infty)\Leftrightarrow w^{\ast}\in W,
\]
where $w^{\ast}$ denotes the non-increasing rearrangement of $w.$ In
particular, we note that, as expected, the $\log$ has bounded oscillation
since%
\[
(\log t)^{\#}=\frac{1}{t}\int_{0}^{t}\log sds-\log t=-1.
\]

It will turn out that $W$ is the correct way to measure oscillation in our
context. In particular, we will show below that, when dealing with the
commutators $\left[  T,\Omega_{w}\right]  ,$\ the corresponding ``good
representation'' (cf. (\ref{buena})) is given by%
\[
\left[  T,\Omega_{w}\right]  f=\int_{0}^{\infty}(\int_{0}^{t}(Tu_{f}%
(s)-u_{Tf}(s))\frac{ds}{s})w^{\#}(s)\frac{ds}{s}.
\]

The purpose of this note is to prove the following abstract analog of the
Coifman-Rochberg-Weiss commutator theorem

\begin{theorem}
\label{teoA}Suppose that $w\in W,$ and let $\bar{X},\bar{Y},$ be Banach pairs.
Then, for any bounded linear operator $T:\bar{X}\rightarrow\bar{Y},$ the
commutator $\left[  T,\Omega_{w}\right]  $ is bounded, $\left[  T,\Omega
_{w}\right]  :\bar{X}_{\theta,q}\rightarrow\bar{Y}_{\theta,q},$ $0<\theta
<1,1\leq q\leq\infty,$ and, moreover,
\[
\left\|  \left[  T,\Omega_{w}\right]  f\right\|  _{\bar{Y}_{\theta,q}}\leq
c\left\|  T\right\|  _{\bar{X}\rightarrow\bar{Y}}\left\|  w\right\|
_{W}\left\|  f\right\|  _{\bar{X}_{\theta,q}}.
\]
\end{theorem}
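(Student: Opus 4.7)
The plan is to start from the tautological $J$-representation of the commutator,
\[
[T,\Omega_{w}]f=\int_{0}^{\infty} g(s)\,w(s)\,\frac{ds}{s},\qquad g(s):=T(u_f(s))-u_{Tf}(s),
\]
and to exploit the fundamental cancellation
\[
\int_{0}^{\infty} g(s)\,\frac{ds}{s}\;=\;Tf-Tf\;=\;0,
\]
which follows directly from (\ref{intro}) applied to $f$ and to $Tf$ together with the continuity of $T:X_0+X_1\to Y_0+Y_1$. The pointwise size of $g$ is controlled by
\[
J(s,g(s);\bar Y)\leq \|T\|_{\bar X\to\bar Y}\,J(s,u_f(s);\bar X)+J(s,u_{Tf}(s);\bar Y),
\]
whose $\Phi_{\theta,q}$ norm is at most $c\|T\|_{\bar X\to\bar Y}\|f\|_{\bar X_{\theta,q}}$ thanks to the near-optimality built into the selections $u_f,u_{Tf}$.

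The second step is to manufacture the ``good representation'' by turning the oscillations of $w$ into the bounded function $w^{\#}$. I write $w=Pw-w^{\#}$ and use the identity $(Pw)'(t)=-w^{\#}(t)/t$ (which falls out of differentiating $t\cdot Pw(t)=\int_0^t w$). Integrating by parts in the $Pw$-piece, with $dG(s)=g(s)\,ds/s$, gives
\[
\int_{0}^{\infty} g(s)\,w(s)\,\frac{ds}{s}\;=\;\int_{0}^{\infty} G(t)\,w^{\#}(t)\,\frac{dt}{t}\;-\;\int_{0}^{\infty} g(s)\,w^{\#}(s)\,\frac{ds}{s},
\]
where $G(t):=\int_{0}^{t} g(s)\,ds/s=-\int_{t}^{\infty} g(s)\,ds/s$; the boundary terms $Pw(t)G(t)$ at $0$ and at $\infty$ vanish by the cancellation at $\infty$ and by the continuity of $G$ at $0$.

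The final step is to view the two resulting integrals as $J$-representations in $\bar Y$ and estimate their $\Phi_{\theta,q}$-norms. The second integrand satisfies $J(s,g(s)w^{\#}(s);\bar Y)\leq \|w\|_W\,J(s,g(s);\bar Y)$, and Step~1 immediately yields the right bound. For the first, the two dual formulas for $G$ give
\[
J(t,G(t);\bar Y)\leq \int_{0}^{t} h(s)\,\frac{ds}{s}+t\int_{t}^{\infty} h(s)\,\frac{ds}{s^{2}},\qquad h(s):=J(s,g(s);\bar Y),
\]
and the right-hand side is the classical Calder\'on operator applied to $h$, which is bounded on the weighted Lebesgue space with norm $\Phi_{\theta,q}$ for every $0<\theta<1$, $1\leq q\leq\infty$ (two one-sided Hardy inequalities). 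Multiplying by $w^{\#}\in L^{\infty}$ and combining the two pieces produces
\[
\|[T,\Omega_{w}]f\|_{\bar Y_{\theta,q}}\leq c\,\|T\|_{\bar X\to\bar Y}\,\|w\|_W\,\|f\|_{\bar X_{\theta,q}}.
\]

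I expect the main obstacle to be the rigorous justification of the integration by parts in the vector-valued setting: one has to verify that $G(t)$ genuinely takes values in $Y_0\cap Y_1$ (not just $Y_0+Y_1$), that the boundary contributions $Pw(t)G(t)$ vanish in the $Y_0+Y_1$ topology at both endpoints, and that any exchanges of integration are legitimate. The standard remedy is to first argue on a dense subspace of $\bar X_{\theta,q}$ for which $u_f$ may be chosen with support in a compact subset of $(0,\infty)$ (equivalently, by discretizing the $J$-representations so that only finitely many levels are in play), carry out the identity there, and then extend by density using the a priori finite bound obtained in the final step.
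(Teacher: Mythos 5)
Your proposal matches the paper's argument in all essentials: the same decomposition $w = Pw - w^{\#}$, the same integration by parts driven by $(Pw)'(t)=-w^{\#}(t)/t$, the same use of the cancellation $\int_0^\infty g\,\frac{ds}{s}=0$ to obtain both the $\int_0^t$ and $-\int_t^\infty$ forms of $G$, and the same reduction of the $J$-estimate to the Calder\'on operator controlled by Hardy's inequalities. The paper merely packages the core estimate as a standalone representation theorem (Theorem~\ref{t1}) and disposes of the boundary terms via the explicit decay estimates of Lemma~\ref{l1} rather than a density argument, but these are organizational rather than mathematical differences.
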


We will also prove higher order versions of this result (cf. \cite{mr} and the
references therein). Using the strong form of the fundamental lemma (cf.
\cite{cjm}) one can connect the results above with those obtained in
\cite{bmr} for the $K-$method, and, moreover, give explicit instances of these operators.

\section{Representation Theorems}

As we have indicated in the Introduction, commutator theorems can be
formulated as results about special representations of certain elements in
interpolation scales. To develop our program explicitly it will be necessary
to integrate by parts often, so we start by collecting some elementary
calculations that will be useful for that purpose.

\begin{lemma}
\label{l2}The operator $P$ is bounded on $W.$
\end{lemma}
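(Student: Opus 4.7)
The plan is to reduce everything to the identity $(Pw)^{\#}=P(w^{\#})$ and then invoke the fact that $P$ is a contraction on $L^{\infty}(0,\infty)$.

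First I would write out what needs to be estimated. By definition $\|Pw\|_{W}=\|P(Pw)-Pw\|_{L^{\infty}}$, so the content of the lemma is the inequality
\[
\|P(Pw)-Pw\|_{L^{\infty}}\leq c\,\|Pw-w\|_{L^{\infty}}.
\]
The key observation is a one-line identity: using only the definition of $P$,
\[
P(Pw)(t)-Pw(t)=\frac{1}{t}\int_{0}^{t}Pw(s)\,ds-\frac{1}{t}\int_{0}^{t}w(s)\,ds=\frac{1}{t}\int_{0}^{t}\bigl(Pw(s)-w(s)\bigr)\,ds,
\]
which is exactly $P(w^{\#})(t)$. In other words $(Pw)^{\#}=P(w^{\#})$, so taking $L^{\infty}$ norms and using that $P$ is a contraction on $L^{\infty}$ (because $|Pf(t)|\leq \frac{1}{t}\int_{0}^{t}|f(s)|\,ds\leq \|f\|_{L^{\infty}}$) gives
\[
\|Pw\|_{W}=\|P(w^{\#})\|_{L^{\infty}}\leq \|w^{\#}\|_{L^{\infty}}=\|w\|_{W},
\]
which is the desired bound (with constant $1$).

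There is essentially no obstacle: the whole proof is the commutation relation $(Pw)^{\#}=P(w^{\#})$ combined with the trivial $L^{\infty}$-boundedness of the averaging operator $P$. The only step that requires any attention is making sure the identity rewriting $P(Pw)(t)-Pw(t)$ as $P(w^{\#})(t)$ is valid pointwise, which follows immediately from linearity of the integral after cancelling the two averages. Thus I would present the lemma in three short lines: the definition of $\|\cdot\|_W$, the identity $(Pw)^{\#}=P(w^{\#})$, and the $L^{\infty}$-contractivity of $P$.
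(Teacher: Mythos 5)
Your proof is correct and takes essentially the same route as the paper: both establish the pointwise identity $(Pw)^{\#}=P(w^{\#})$ by adding and subtracting $\frac{1}{t}\int_0^t w(s)\,ds=Pw(t)$, and then use the $L^\infty$-contractivity of $P$ to conclude $\|Pw\|_W\leq\|w\|_W$.
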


\begin{proof}%
\begin{align*}
(Pw)^{\#}(t)  &  =\frac{1}{t}\int_{0}^{t}Pw(s)ds-Pw(t)\\
&  =\frac{1}{t}\int_{0}^{t}\left(  Pw(s)-w(s)\right)  ds+Pw(t)-Pw(t).
\end{align*}
Therefore,%
\[
\left|  (Pw)^{\#}(t)\right|  \leq\left\|  w\right\|  _{W}.
\]
\end{proof}

\begin{lemma}
\label{l1} Let $w\in W$, and let $0<\theta<1.$ Then%
\[
\lim\limits_{t\rightarrow0}t^{\theta}w(t)=\lim\limits_{t\rightarrow\infty
}t^{-\theta}w(t)=0.
\]

\begin{proof}
Write $Pw=w^{\#}+w,$ then, since $w^{\#}$ is bounded, $\lim
\limits_{t\rightarrow0}t^{\theta}w^{\#}(t)=\lim\limits_{t\rightarrow\infty
}t^{-\theta}w^{\#}(t)=0,$ and we see that it is enough to show that
$\lim\limits_{t\rightarrow0}t^{\theta}Pw(t)=\lim\limits_{t\rightarrow\infty
}t^{-\theta}Pw(t)=0.$ Now, from $tPw(t)=\int_{0}^{t}w(s)ds,$ we get
$(Pw)^{\prime}(t)=-\frac{Pw(t)-w(t)}{t}.$ Therefore,%
\begin{align*}
\left|  Pw(t)\right|   &  \leq\left|  Pw(1)\right|  +\left|  \int_{t}%
^{1}w^{\#}(s)\frac{ds}{s}\right| \\
&  \leq\left\|  w\right\|  _{W}(1+\left|  \log t\right|  ).
\end{align*}
and the result follows.
\end{proof}
\end{lemma}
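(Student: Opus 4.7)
The plan is to exploit the decomposition $w=Pw-w^{\#}$, which recasts control of $w$ as control of the averaged function $Pw$: since by hypothesis $w^{\#}\in L^{\infty}$, we trivially have $t^{\theta}w^{\#}(t)\to 0$ as $t\to 0$ and $t^{-\theta}w^{\#}(t)\to 0$ as $t\to\infty$, so the problem reduces to proving the corresponding statements with $w$ replaced by $Pw$. The key observation is that $Pw$ can grow only logarithmically, so after multiplication by $t^{\pm\theta}$ it is crushed to zero at the endpoints.

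To establish the logarithmic growth I would differentiate the defining identity $tPw(t)=\int_{0}^{t}w(s)\,ds$ to obtain
\[
(Pw)'(t)=\frac{w(t)-Pw(t)}{t}=-\frac{w^{\#}(t)}{t}.
\]
Since $|w^{\#}|\le\|w\|_{W}$, this pointwise bound on $(Pw)'$ can be integrated from the anchor $t=1$; separating the cases $t<1$ and $t>1$ and applying the triangle inequality, one gets
\[
|Pw(t)|\le |Pw(1)|+\|w\|_{W}\,|\log t|
\]
for all $t>0$.

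Combining the two pieces via $w=Pw-w^{\#}$ gives $|w(t)|\le C(1+|\log t|)$, after which the elementary fact that $t^{\theta}|\log t|\to 0$ as $t\to 0$ and $t^{-\theta}|\log t|\to 0$ as $t\to\infty$ for any $\theta>0$ delivers both limits. The one step of real substance is the differential identity $(Pw)'=-w^{\#}/t$, which converts the \emph{BMO}-type hypothesis into a sharp $O(\log)$ growth estimate for $Pw$; the only mild pitfall I anticipate is keeping the orientation of integration straight so that the bound is manifestly symmetric in the two regimes $t\to 0^{+}$ and $t\to\infty$, but beyond that the argument is pure calculus.
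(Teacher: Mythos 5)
Your proposal is correct and follows essentially the same route as the paper's proof: decompose via $w^{\#}$, reduce to $Pw$, use the identity $(Pw)'(t)=-w^{\#}(t)/t$ to integrate from $t=1$ and obtain the logarithmic bound $|Pw(t)|\le |Pw(1)|+\|w\|_{W}|\log t|$, then invoke the decay of $t^{\pm\theta}|\log t|$.
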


Although we shall not make use of the next result in this section it is
convenient to state it here to stress the $BMO$ characteristics of the space $W.$

\begin{lemma}
\label{l3} (i) (cf. \cite{bmr}) Let $\overline{Q}f(t)=\int_{t}^{1}%
f(s)\frac{ds}{s}$ then
\[
W=L_{\infty}+\overline{Q}(L_{\infty}).
\]
(ii) Let $W_{1}=\left\{  w:\sup\limits_{s}\left|  sw^{\prime}(s)\right|
<\infty\right\}  .$ Then, $W_{1}\subset W.$

(iii)%
\[
W=L_{\infty}+W_{1}.
\]
\end{lemma}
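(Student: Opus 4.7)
My plan is to prove the three parts in sequence, leveraging two ingredients already available: the identity $(Pw)'(t)=-w^{\#}(t)/t$, which was derived in the proof of \lemref{l1} by differentiating $tPw(t)=\int_{0}^{t}w(s)\,ds$, together with one explicit Fubini computation for $P(\overline{Q}g)$.

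For part (i), the trivial inclusion $L_{\infty}\subset W$ combined with $\overline{Q}(L_{\infty})\subset W$ gives one direction; the latter I would verify by swapping the order of integration in
\[
P(\overline{Q}g)(t)=\frac{1}{t}\int_{0}^{t}\int_{s}^{1}g(r)\,\frac{dr}{r}\,ds
\]
to obtain the clean identity $P(\overline{Q}g)=Pg+\overline{Q}g$, whence $(\overline{Q}g)^{\#}=Pg\in L_{\infty}$. For the reverse inclusion, given $w\in W$ I would integrate $(Pw)'(s)=-w^{\#}(s)/s$ from $t$ to $1$ to obtain $Pw(t)=Pw(1)+\overline{Q}(w^{\#})(t)$, and then write
\[
w=Pw-w^{\#}=\bigl[\,Pw(1)-w^{\#}\,\bigr]+\overline{Q}(w^{\#}),
\]
which exhibits $w$ as an element of $L_{\infty}+\overline{Q}(L_{\infty})$ since $Pw(1)$ is a constant and $w^{\#}\in L_{\infty}$.

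Part (ii) is immediate once one rewrites, via Fubini on an absolutely continuous representative of $w$,
\[
w^{\#}(t)=\frac{1}{t}\int_{0}^{t}(w(s)-w(t))\,ds=-\frac{1}{t}\int_{0}^{t}rw'(r)\,dr,
\]
which is then bounded pointwise by $\|w\|_{W_{1}}=\sup_{r}|rw'(r)|$. For part (iii), the inclusion $L_{\infty}+W_{1}\subset W$ follows at once from (ii) together with $L_{\infty}\subset W$; for the converse one just invokes (i), since if $w=f+\overline{Q}g$ with $f,g\in L_{\infty}$, then $t(\overline{Q}g)'(t)=-g(t)$ is bounded, so $\overline{Q}g\in W_{1}$ and thus $w\in L_{\infty}+W_{1}$.

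No step carries real conceptual difficulty; all three parts reduce to one-dimensional integration-by-parts or Fubini manipulations, hinged on the ODE for $Pw$. The main thing to watch is the bookkeeping of signs and orientations when passing between $P$, $\overline{Q}$, and $(Pw)'$, together with ensuring that the representatives chosen are regular enough for $w'$ to make sense in the argument for (ii).
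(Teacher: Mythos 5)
Your proposal is correct, and in part it goes beyond what the paper actually writes out. For (ii), your Fubini computation $w^{\#}(t)=-\frac{1}{t}\int_{0}^{t}rw'(r)\,dr=-P(sw'(s))(t)$ is the same identity the paper obtains by integration by parts, so this part is essentially identical. For (i), the paper simply cites \cite{bmr}; you give a self-contained argument, with the Fubini identity $P(\overline{Q}g)=Pg+\overline{Q}g$ (hence $(\overline{Q}g)^{\#}=Pg\in L_{\infty}$) in one direction and, in the other, integrating the ODE $(Pw)'(s)=-w^{\#}(s)/s$ from $t$ to $1$ to obtain $Pw=Pw(1)+\overline{Q}(w^{\#})$, giving the decomposition $w=(Pw(1)-w^{\#})+\overline{Q}(w^{\#})$. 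This is a legitimate (and welcome) filling-in of the reference. For (iii), the two arguments diverge: the paper proves the decomposition directly by observing that $|t(Pw)'(t)|=|w^{\#}(t)|$ is bounded, so $Pw\in W_{1}$ and $w=(w-Pw)+Pw$ does the job; you instead deduce (iii) from (i) by noting $t(\overline{Q}g)'(t)=-g(t)$ so $\overline{Q}(L_{\infty})\subset W_{1}$. Both are correct and short; the paper's route has the advantage of not depending on (i), while yours shows cleanly that $\overline{Q}(L_{\infty})\subset W_{1}$ and makes the logical dependence (i) $\Rightarrow$ (iii) explicit. One small point worth noting: your integration of $(Pw)'$ from $t$ to $1$ in part (i) already produces exactly the ingredient the paper uses for (iii) (that $Pw$ has bounded $t(Pw)'$, i.e.\ $Pw\in W_{1}$), so the two approaches are more closely related than they first appear.
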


\begin{proof}
(i) see \cite{bmr}.

(ii) Suppose that $w\in W_{1}$. Integrating by parts
\[
\frac{1}{x}\int_{0}^{x}sw^{\prime}(s)ds=\frac{1}{x}\left.  sw(s)\right|
_{s=0}^{s=x}-\frac{1}{x}\int_{0}^{x}w(s)ds.
\]
It is easy to see (cf. Lemma \ref{l1}) that $\lim\limits_{x\rightarrow
0}sw(s)=0,$ hence%
\[
\left|  w^{\#}(x)\right|  =\left|  P(sw^{\prime}(s))(x)\right|  .
\]
Consequently, since $P$ is bounded on $L^{\infty},$ it follows that $w^{\#}\in
L^{\infty}$ and therefore $w\in W.$

(iii) Suppose that $w\in W.$ Since $\left|  t\left(  Pw\right)  ^{\prime
}\right|  =\left|  w^{\#}(t)\right|  ,$ it follows that $Pw\in W_{1}.$ The
desired decomposition is therefore given by%
\[
w=\underset{L^{\infty}}{\underbrace{\left(  w-Pw\right)  }}+\underset{W_{1}%
}{\underbrace{Pw}}.
\]
\end{proof}

The next result gives the representation theorem that we need to prove Theorem
\ref{teoA}.

\begin{theorem}
\label{t1}Let $\overline{H}=(H_{0},H_{1})$ be a Banach pair, and suppose that
$w\in W.$ Suppose that an element $f\in H_{0}+H_{1}$ can be represented as
\[
f=\int_{0}^{\infty}u(s)w(s)\frac{ds}{s},
\]
with
\[
\int_{0}^{\infty}u(s)\frac{ds}{s}=0,\text{ }\Phi_{\theta,q}(J(t,u(t);\overline
{H}))<\infty.
\]
Then,
\[
f\in\overline{H}_{\theta,q},
\]
and, moreover,%
\[
\left\|  f\right\|  _{\overline{H}_{\theta,q}}\leq c_{\theta,q}\left\|
w\right\|  _{W}\Phi_{\theta,q}(J(t,u(t);\overline{H})).
\]
\end{theorem}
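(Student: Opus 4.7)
My plan is to use the decomposition $w = Pw - w^{\#}$ together with the cancellation $\int_0^\infty u(s)\,\frac{ds}{s}=0$, performing a single integration by parts to rewrite $f$ as a standard $J$-representation whose integrand is pointwise dominated by $\|w\|_W$ times quantities already controlled in terms of $u$.

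First, I split $f = \int_0^\infty u(s)Pw(s)\,\frac{ds}{s} - \int_0^\infty u(s)w^{\#}(s)\,\frac{ds}{s}$. The second term is already harmless, because $|w^{\#}|\le \|w\|_W$ yields the pointwise bound $J(s,u(s)w^{\#}(s);\overline{H})\le \|w\|_W\, J(s,u(s);\overline{H})$. For the first term I set $U(s) = \int_0^s u(r)\,\frac{dr}{r}$ and integrate by parts on $[\varepsilon, N]$, using the identity $s(Pw)'(s) = -w^{\#}(s)$ (which was established in the proof of Lemma~\ref{l1}). This produces
\[
\int_\varepsilon^N u(s)Pw(s)\,\frac{ds}{s} = Pw(N)U(N) - Pw(\varepsilon)U(\varepsilon) + \int_\varepsilon^N U(s)w^{\#}(s)\,\frac{ds}{s}.
\]

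The key step is to show that the two boundary contributions vanish as $\varepsilon\downarrow 0$ and $N\uparrow\infty$. Lemma~\ref{l1} supplies $|Pw(s)|\le\|w\|_W(1+|\log s|)$, while a H\"older argument applied to $\Phi_{\theta,q}(J(\cdot,u(\cdot);\overline{H}))<\infty$ yields $\|U(\varepsilon)\|_{H_0+H_1} = o(\varepsilon^{\theta})$ and, via the cancellation rewritten as $U(N) = -\int_N^\infty u(r)\,\frac{dr}{r}$, $\|U(N)\|_{H_0+H_1} = o(N^{-(1-\theta)})$. Since $0<\theta<1$, these power decays absorb the logarithmic growth of $Pw$ and the boundary terms disappear. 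Passing to the limit yields the clean formula
\[
f = \int_0^\infty \bigl(U(s)-u(s)\bigr)w^{\#}(s)\,\frac{ds}{s}.
\]
Thus $v(s):=(U(s)-u(s))w^{\#}(s)$ is a $J$-representation of $f$ satisfying $J(s,v(s);\overline{H})\le \|w\|_W\bigl(J(s,U(s);\overline{H})+J(s,u(s);\overline{H})\bigr)$. Subadditivity of $J(s,\cdot;\overline{H})$ in its second variable, combined with the two expressions $U(s) = \int_0^s u\,dr/r$ and $U(s) = -\int_s^\infty u\,dr/r$, gives the bounds $J(s,U(s);\overline{H})\le \int_0^s (s/r)J(r,u(r);\overline{H})\,\frac{dr}{r}$ for $s$ small and $J(s,U(s);\overline{H})\le \int_s^\infty J(r,u(r);\overline{H})\,\frac{dr}{r}$ for $s$ large. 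The classical Hardy inequalities (available for all $1\le q\le\infty$ precisely because $0<\theta<1$) then give $\Phi_{\theta,q}(J(\cdot,U(\cdot);\overline{H}))\le c_{\theta,q}\,\Phi_{\theta,q}(J(\cdot,u(\cdot);\overline{H}))$, and combining the three contributions yields the claimed estimate.

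The main obstacle I anticipate is the boundary-term analysis. It is exactly there that both hypotheses $0<\theta<1$ and $\int_0^\infty u\,ds/s=0$ enter in an essential way: the former is needed to defeat the logarithmic growth of $Pw$ at $0$ and $\infty$, while the latter is what makes $U(N)$ decay at all as $N\to\infty$. Once the boundary is dealt with, the remaining ingredients (pointwise domination by $w^{\#}$, subadditivity of $J$, and Hardy's inequalities) are standard tools of real interpolation.
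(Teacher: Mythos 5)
Your overall strategy is the same as the paper's: write $w=Pw-w^{\#}$, keep the bounded $w^{\#}$ piece directly, integrate by parts on the $Pw$ piece using $s(Pw)'(s)=-w^{\#}(s)$, dispose of the boundary terms via Lemma~\ref{l1} together with the H\"older bound $\|U(t)\|_{H_0}\leq c\,t^{\theta}\Phi_{\theta,q}(J(\cdot,u(\cdot)))$ (and, after invoking the cancellation $U(t)=-\int_t^\infty u\,dr/r$, the analogous $H_1$ bound), and finish with Hardy. The paper keeps the two pieces $I_1,I_2$ separate while you recombine them into the single representation $v(s)=(U(s)-u(s))w^{\#}(s)$, but that is cosmetic.

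There is, however, a genuine error in your final step. You claim
\[
J(s,U(s);\overline{H})\le\int_0^s\frac{s}{r}J(r,u(r);\overline{H})\,\frac{dr}{r}\quad(\text{small }s),
\qquad
J(s,U(s);\overline{H})\le\int_s^\infty J(r,u(r);\overline{H})\,\frac{dr}{r}\quad(\text{large }s),
\]
and then assert that Hardy yields $\Phi_{\theta,q}(J(\cdot,U(\cdot)))\le c\,\Phi_{\theta,q}(J(\cdot,u(\cdot)))$. Neither bound is compatible with Hardy's inequalities at the stated weight $s^{-\theta}$: the first has an extra factor $s/r\ge1$ (its $\Phi_{\theta,q}$ norm would need a Hardy inequality for $s^{1-\theta}\int_0^s(J(r)/r)\,\frac{dr}{r}$, which fails since $1-\theta>0$), and the second requires a tail-Hardy inequality at a \emph{decreasing} weight $s^{-\theta}$, which is false. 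Concretely, take $J(r,u(r))=r^{\theta}\mathbf{1}_{[1,N]}(r)$: then $\Phi_{\theta,q}(J)\sim(\log N)^{1/q}$ but $\Phi_{\theta,q}\bigl(\int_s^\infty J\,\frac{dr}{r}\bigr)\gtrsim N^{\theta}$, so the claimed inequality cannot hold with a constant independent of $u$.

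The fix is to estimate the two halves of the $J$-functional with \emph{different} representations of $U$, as the paper does: use $U(s)=\int_0^s u\,\frac{dr}{r}$ only for $\|U(s)\|_{H_0}\le\int_0^s J(r,u(r))\,\frac{dr}{r}$, and use $U(s)=-\int_s^\infty u\,\frac{dr}{r}$ only for $s\|U(s)\|_{H_1}\le s\int_s^\infty J(r,u(r))\,\frac{dr}{r^2}$. This gives
\[
J(s,U(s);\overline{H})\le\int_0^s J(r,u(r);\overline{H})\,\frac{dr}{r}+s\int_s^\infty J(r,u(r);\overline{H})\,\frac{dr}{r^2},
\]
and now the two classical Hardy inequalities (the first needing $\theta>0$, the second $\theta<1$) give exactly $\Phi_{\theta,q}(J(\cdot,U(\cdot)))\le c_{\theta,q}\Phi_{\theta,q}(J(\cdot,u(\cdot)))$. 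With that correction your proof coincides with the paper's.
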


\begin{proof}
Write%
\begin{align*}
f  &  =\int_{0}^{\infty}u(s)w(s)\frac{ds}{s}\\
&  =\int_{0}^{\infty}u(s)(w(s)-Pw(s))\frac{ds}{s}+\int_{0}^{\infty
}u(s)Pw(s)\frac{ds}{s}\\
&  =I_{1}+I_{2}.
\end{align*}
It is plain that%
\[
\left\|  I_{1}\right\|  _{\overline{H}_{\theta,q}}\leq\left\|  w\right\|
_{W}\Phi_{\theta,q}(J(t,u(t);\overline{H})).
\]
It remains to estimate $I_{2}.$ We integrate by parts:%
\[
I_{2}=\left.  Pw(t)\int_{0}^{t}u(s)\frac{ds}{s}\right]  _{0}^{\infty}-\int
_{0}^{\infty}\left(  \int_{0}^{t}u(s)\frac{ds}{s}\right)  (w(t)-Pw(t))\frac
{dt}{t}.
\]
The integrated term vanishes. Suppose first that $q>1.$ We can write%

\begin{align}
\left\|  \int_{0}^{t}u(s)\frac{ds}{s}\right\|  _{H_{0}}  &  \leq\int_{0}%
^{t}J(s,u(s))\frac{ds}{s}\leq\left(  \int_{0}^{t}\left(  \frac{J(s,u(s))}%
{s^{\theta}}\right)  ^{q}\frac{ds}{s}\right)  ^{1/q}\left(  \int_{0}%
^{t}s^{\theta q\prime}\frac{ds}{s}\right)  ^{1/q^{\prime}}\label{referato}\\
&  \leq c_{\theta,q}\Phi_{\theta,q}(J(t,u(t);\overline{H}))t^{\theta
}.\nonumber
\end{align}
By Lemma \ref{l2} $Pw\in W$ and therefore we may apply Lemma \ref{l1} to
conclude that%
\begin{align*}
\lim_{t\rightarrow0}\left|  Pw(t)\right|  \left\|  \int_{0}^{t}u(s)\frac
{ds}{s}\right\|  _{H_{0}}  &  \leq c_{\theta,q}\lim_{t\rightarrow0}%
\Phi_{\theta,q}(J(t,u(t);\overline{H}))t^{\theta}\left|  Pw(t)\right| \\
&  =0.
\end{align*}
Likewise, using the cancelation condition
\begin{equation}
\int_{0}^{t}u(s)\frac{ds}{s}=-\int_{t}^{\infty}u(s)\frac{ds}{s}, \label{can}%
\end{equation}
we have that
\[
\left\|  \int_{t}^{\infty}u(s)\frac{ds}{s}\right\|  _{H_{1}}\leq c\Phi
_{\theta,q}(J(t,u(t);\overline{H}))t^{-\theta},
\]
and once again we can apply Lemma \ref{l1} and find that%
\[
\lim_{t\rightarrow\infty}\left|  Pw(t)\right|  \left\|  \int_{t}^{\infty
}u(s)\frac{ds}{s}\right\|  _{H_{1}}=0.
\]

The case $q=1$ is simpler. For example, instead of using Holder's inequality
in (\ref{referato}) we write%
\[
\int_{0}^{t}J(s,u(s))\frac{ds}{s}=\frac{t^{\theta}}{t^{\theta}}\int_{0}%
^{t}J(s,u(s))\frac{ds}{s}\leq t^{\theta}\int_{0}^{t}\frac{J(s,u(s))}%
{s^{\theta}}\frac{ds}{s}.
\]

It remains to estimate the $\overline{H}_{\theta,q}$ norm of $I_{2}=\int
_{0}^{\infty}\left(  \int_{0}^{t}u(s)\frac{ds}{s}\right)  (w(t)-Pw(t))\frac
{dt}{t}.$ By definition,%
\begin{equation}
\left\|  I_{2}\right\|  _{\overline{H}_{\theta,q}}\leq\Phi_{\theta,q}(J(t)),
\label{laotra}%
\end{equation}
where
\begin{align*}
J(t)  &  =J(t,\left(  \int_{0}^{t}u(s)\frac{ds}{s}\right)
(w(t)-Pw(t));\overline{H})\\
&  \leq\left\|  w\right\|  _{W}\left(  \left\|  \int_{0}^{t}u(s)\frac{ds}%
{s}\right\|  _{H_{0}}+t\left\|  \int_{0}^{t}u(s)\frac{ds}{s}\right\|  _{H_{1}%
}\right)  .
\end{align*}
The first term on the right hand side can be estimated directly by Minkowski's
inequality%
\[
\left\|  \int_{0}^{t}u(s)\frac{ds}{s}\right\|  _{H_{0}}\leq\int_{0}%
^{t}J(s,u(s);\overline{H})\frac{ds}{s},
\]

while for the second we argue that, by (\ref{can}),%
\begin{align*}
t\left\|  \int_{0}^{t}u(s)\frac{ds}{s}\right\|  _{H_{1}}  &  =t\left\|
\int_{t}^{\infty}u(s)\frac{ds}{s}\right\|  _{H_{1}}\\
&  \leq t\int_{t}^{\infty}J(s,u(s);\overline{H})\frac{ds}{s^{2}}.
\end{align*}
Altogether, we arrive at
\[
J(t)\leq\left\|  w\right\|  _{W}\left(  \int_{0}^{t}J(s,u(s);\overline
{H})\frac{ds}{s}+t\int_{t}^{\infty}J(s,u(s);\overline{H})\frac{ds}{s^{2}%
}\right)  .
\]
Therefore, applying the $\Phi_{\theta,q}$ norm on both sides of the previous
inequality and then using Hardy's inequalities to estimate the right hand
side, we get%
\begin{equation}
\Phi_{\theta,q}(J(t))\leq c_{\theta,q}\left\|  w\right\|  _{W}\Phi_{\theta
,q}\left(  J(t,u(t);\overline{H})\right)  . \label{launa}%
\end{equation}
Combining (\ref{launa}) and (\ref{laotra})%
\[
\left\|  I_{2}\right\|  _{\overline{H}_{\theta,q}}\leq c_{\theta,q}\left\|
w\right\|  _{W}\Phi_{\theta,q}(J(t)),
\]
and collecting the estimates for $I_{1}$ and $I_{2}$ we finally obtain
\[
\left\|  f\right\|  _{\overline{H}_{\theta,q}}\leq c_{\theta,q}\left\|
w\right\|  _{W}\Phi_{\theta,q}(J(t,u(t);\overline{H}))
\]
as we wished to show.
\end{proof}

We are now ready for the proof of Theorem \ref{teoA}.

\begin{proof}
Suppose that $T$ is a given bounded linear operator $T:\bar{X}\rightarrow
\bar{Y},$ and let $w\in W.$ Let $\tilde{u}(t)=((u_{Tf}(t)-T(u_{f}(t)).$ Then
\[
\left[  T,\Omega_{w}\right]  f=\int_{0}^{\infty}\tilde{u}(t)w(t)\frac{dt}{t}%
\]
with%
\[
\Phi_{\theta,q}(J(t,\tilde{u}(t);\bar{Y})\leq c\left\|  T\right\|  _{\bar
{X}\rightarrow\bar{Y}}\left\|  f\right\|  _{\overline{X}_{\theta,q}}.
\]
Since, moreover,
\[
\int_{0}^{\infty}\tilde{u}(t)\frac{dt}{t}=0,
\]
we can apply theorem \ref{t1} to conclude that%
\[
\left\|  \left[  T,\Omega_{w}\right]  f\right\|  _{\overline{Y}_{\theta,q}%
}\leq c\left\|  w\right\|  _{W}\left\|  T\right\|  _{\bar{X}\rightarrow\bar
{Y}}\left\|  f\right\|  _{\overline{X}_{\theta,q}},
\]
as we wished to show.
\end{proof}

\section{Higher order cancellations}

We adapt the analysis of \cite{mr} to handle higher order cancellations. The
corresponding higher order commutator theorems that follow will be stated and
proved in the next section.

\begin{theorem}
\label{t2} Let $\overline{H}$ be a Banach pair, and let $w\in W.$ Suppose that
$f$ admits a representation
\[
f=\int_{0}^{\infty}u(s)\left(  Pw(s)\right)  ^{2}\frac{ds}{s},
\]
with
\[
\int_{0}^{\infty}u(s)\frac{ds}{s}=0,\text{ }\int_{0}^{\infty}u(s)Pw(s)\frac
{ds}{s}=0;\text{ \ }\Phi_{\theta,q}(J(t,u(t);\overline{H}))<\infty
\]
then,
\[
f\in\overline{H}_{\theta,q},
\]
and, moreover,%
\[
\left\|  f\right\|  _{\overline{H}_{\theta,q}}\leq c\left\|  w\right\|
_{W}^{2}\Phi_{\theta,q}(J(t,u(t);\overline{H})).
\]

\begin{proof}
We will integrate by parts repeatedly. We start writing%
\[
f=\int_{0}^{\infty}u(t)\left(  Pw(t)\right)  ^{2}\frac{dt}{t}=\int_{0}%
^{\infty}Pw(t)d\left(  \int_{0}^{t}u(s)Pw(s)\frac{ds}{s}\right)  .
\]
Then,
\[
f=\left.  Pw(t)\int_{0}^{t}u(s)Pw(s)\frac{ds}{s}\right]  _{0}^{\infty}%
-\int_{0}^{\infty}\left(  \int_{0}^{t}u(s)Pw(s)\frac{ds}{s}\right)
(w(t)-Pw(t))\frac{dt}{t},
\]
we will show below that the integrated term vanishes, then%
\begin{equation}
f=-\int_{0}^{\infty}\left(  \int_{0}^{t}u(s)Pw(s)\frac{ds}{s}\right)
(w(t)-Pw(t))\frac{dt}{t}.\label{insertada}%
\end{equation}
Now we consider the inner integral\ and integrate by parts%
\[
\int_{0}^{t}u(s)Pw(s)\frac{ds}{s}=\int_{0}^{t}Pw(s)d\left(  \int_{0}%
^{s}u(r)\frac{dr}{r}\right)  ,
\]
using the fact that (cf. the proof of Theorem \ref{t1}) $\lim
\limits_{t\rightarrow0}Pw(t)\int_{0}^{t}u(s)\frac{ds}{s}=0,$ we get%
\[
\int_{0}^{t}u(s)Pw(s)\frac{ds}{s}=Pw(t)\int_{0}^{t}u(s)\frac{ds}{s}-\int
_{0}^{t}\left(  \int_{0}^{r}u(s)\frac{ds}{s}\right)  \left(
w(r)-Pw(r)\right)  \frac{dr}{r}.
\]
Inserting this result back in (\ref{insertada}) we find that%
\begin{align*}
f &  =-\int_{0}^{\infty}\left(  \int_{0}^{t}u(s)Pw(s)\frac{ds}{s}\right)
(w(t)-Pw(t))\frac{dt}{t}\\
&  =\int_{0}^{\infty}\left(  Pw(t)\int_{0}^{t}u(s)\frac{ds}{s}\right)
w^{\#}(t)\frac{dt}{t}+\int_{0}^{\infty}\left(  \int_{0}^{t}\left(  \int
_{0}^{r}u(s)\frac{ds}{s}\right)  w^{\#}(r)\frac{dr}{r}\right)  w^{\#}%
(t)\frac{dt}{t}\\
&  =I_{0}+I_{1}.
\end{align*}
Integrating by parts $I_{0}$ we get%
\begin{align*}
I_{0} &  =\left.  Pw(t)\int_{0}^{t}\left(  w^{\#}(r)\int_{0}^{r}u(s)\frac
{ds}{s}\right)  \frac{dr}{r}\right|  _{0}^{\infty}\\
&  +\int_{0}^{\infty}\left(  \int_{0}^{t}\left(  \int_{0}^{r}u(s)\frac{ds}%
{s}\right)  w^{\#}(r)\frac{dr}{r}\right)  w^{\#}(t)\frac{dt}{t},
\end{align*}
where once again the integrated term vanishes. Hence,%
\[
I_{0}=I_{1}.
\]
Therefore, if we let $U(t)=2\left(  \int_{0}^{t}\left(  \int_{0}^{r}%
u(s)\frac{ds}{s}\right)  w^{\#}(r)\frac{dr}{r}\right)  w^{\#}(t),$ $f$ can be
represented by%
\[
f=\int_{0}^{\infty}U(t)\frac{dt}{t}.
\]
Now we estimate the corresponding $J-$functional, $J(t)=J(t,U(t);\bar{H}),$
by
\begin{align*}
&  2\left\|  w\right\|  _{W}\left(  \left\|  \int_{0}^{t}\left(  \int_{0}%
^{r}u(s)\frac{ds}{s}\right)  w^{\#}(r)\frac{dr}{r}\right\|  _{H_{0}}+t\left\|
\int_{0}^{t}\left(  \int_{0}^{r}u(s)\frac{ds}{s}\right)  w^{\#}(r)\frac{dr}%
{r}\right\|  _{H_{1}}\right)  \\
&  =2\left\|  w\right\|  _{W}\left(  C_{0}+tC_{1}\right)  .
\end{align*}
We readily see that $C_{0}$ is majorized by%
\[
C_{0}\leq\left\|  w\right\|  _{W}\int_{0}^{t}\left(  \int_{0}^{r}%
J(s,u(s);\overline{H})\frac{ds}{s}\right)  \frac{dr}{r}=\left\|  w\right\|
_{W}\int_{0}^{t}J(r,u(r);\overline{H})\ln\frac{t}{r}\frac{dr}{r}.
\]
To handle $C_{1}$ we work with the integral inside the norm $H_{1}$ by first
using $\int_{0}^{r}u(s)\frac{ds}{s}=-\int_{r}^{\infty}u(s)\frac{ds}{s}$ and
then changing the order of integration. We find that%
\[
C_{1}=\left\|  \lim_{\alpha\rightarrow0}C(\alpha)\right\|  _{H_{1}},
\]
where $C(\alpha)=\int_{0}^{t}\int_{\alpha}^{s}w^{\#}(r)\frac{dr}{r}%
u(s)\frac{ds}{s}+\int_{t}^{\infty}\int_{\alpha}^{t}w^{\#}(r)\frac{dr}%
{r}u(s)\frac{ds}{s}.$ We compute $C(\alpha)$ using the formula $(Pw)^{\prime
}(t)=-\frac{w^{\#}(t)}{t},$ and we get
\[
C(\alpha)=Pw(\alpha)\int_{0}^{t}u(s)\frac{ds}{s}-\int_{0}^{t}Pw(s)u(s)\frac
{ds}{s}+Pw(\alpha)\int_{t}^{\infty}u(s)\frac{ds}{s}-Pw(t)\int_{t}^{\infty
}u(s)\frac{ds}{s}.
\]
Now by the cancellation conditions:%
\[
\int_{0}^{\infty}u(s)\frac{ds}{s}=0\Longrightarrow Pw(\alpha)\int_{0}%
^{t}u(s)\frac{ds}{s}=-Pw(\alpha)\int_{t}^{\infty}u(s)\frac{ds}{s},
\]
and%
\[
\int_{0}^{\infty}u(s)Pw(s)\frac{ds}{s}=0\Longrightarrow\int_{0}^{t}%
u(s)Pw(s)\frac{ds}{s}=-\int_{t}^{\infty}u(s)Pw(s)\frac{ds}{s},
\]
we have%
\begin{align*}
C(\alpha) &  =\int_{t}^{\infty}u(s)[Pw(s)-Pw(t)]\frac{ds}{s}\\
&  =\int_{t}^{\infty}u(s)\int_{t}^{s}w^{\#}(r)\frac{dr}{r}\frac{ds}{s}.
\end{align*}
All in all it follows that,%
\begin{align*}
C_{1} &  \leq\left\|  w\right\|  _{W}\int_{t}^{\infty}\left\|  u(s)\right\|
_{H_{1}}\ln\frac{s}{t}\frac{ds}{s}\\
&  \leq\left\|  w\right\|  _{W}\int_{t}^{\infty}J(s,u(s);\overline{H})\ln
\frac{s}{t}\frac{ds}{s^{2}}.
\end{align*}
Summarizing,%
\[
J(t)\leq2\left\|  w\right\|  _{W}^{2}\left(  \int_{0}^{t}J(r,u(r);\overline
{H})\ln\frac{t}{r}\frac{dr}{r}+t\int_{t}^{\infty}J(r,u(r);\overline{H}%
)\ln\frac{r}{t}\frac{dr}{r^{2}}\right)  .
\]
Applying the $\Phi_{\theta,q}$ norm and Hardy's inequalities (twice) we
finally obtain%
\begin{align*}
\left\|  f\right\|  _{\overline{H}_{\theta,q}} &  \leq c\Phi_{\theta,q}\left(
J(t,u(t);\overline{H})\right)  \\
&  \leq c\left\|  w\right\|  _{W}^{2}\Phi_{\theta,q}\left(  J(t,u(t);\overline
{H})\right)  .
\end{align*}
To conclude the proof it remains to verify that the integrated terms we have
collected along the way effectively vanish. More precisely, it remains to
prove that
\begin{equation}
\lim_{t\rightarrow\xi}Pw(t)\int_{0}^{t}u(s)Pw(s)\frac{ds}{s}=0\text{, \ for
}\xi=0,\infty,\label{cer1}%
\end{equation}
and
\begin{equation}
\lim_{t\rightarrow\xi}Pw(t)\int_{0}^{t}\left(  \left(  w(r)-Pw(r)\right)
\int_{0}^{r}u(s)\frac{ds}{s}\right)  \frac{dr}{r}=0\text{, for }\xi
=0,\infty\text{.}\label{cer2}%
\end{equation}
To handle these limits we shall assume that $q>1$, the case $q=1$ is easier
(cf. the proof of Theorem \ref{t1} above). We start with (\ref{cer1}):%
\begin{align*}
\left\|  \int_{0}^{t}u(s)Pw(s)\frac{ds}{s}\right\|  _{H_{0}} &  \leq\int
_{0}^{t}J(s,u(s);\overline{H})\left|  Pw(s)\right|  \frac{ds}{s}\\
&  \leq\left(  \int_{0}^{t}\left(  \frac{J(s,u(s);\overline{H})}{s^{\theta}%
}\right)  ^{q}\frac{ds}{s}\right)  ^{1/q}\left(  \int_{0}^{t}\left(
s^{\theta}\left|  Pw(s)\right|  \right)  ^{q^{\prime}}\frac{ds}{s}\right)
^{1/q^{\prime}}\\
&  \leq\left(  \Phi_{\theta,q}(J(s,u(s);\overline{H}))\right)  c\left(
\int_{0}^{t}\left(  s^{\theta}\left|  w(s)\right|  \right)  ^{q^{\prime}}%
\frac{ds}{s}\right)  ^{1/q^{\prime}}\text{ (by Hardy's inequality)}%
\end{align*}
Let $\widetilde{\theta}$ $>0$ be such that $\theta-\widetilde{\theta}$ $>0.$
Since $w\in W$ $\Rightarrow Pw\in W$ (cf. Lemma \ref{l2})$,$ therefore, by
Lemma \ref{l1}, we have
\[
\left|  t^{\widetilde{\theta}}Pw(t)\right|  \leq1\text{ \ \ \ (if }t\text{
suff. close to }0\text{).}%
\]
Thus, for small t,
\[
\left(  \int_{0}^{t}\left(  s^{\theta}\left|  Pw(s)\right|  \right)
^{q^{\prime}}\frac{ds}{s}\right)  ^{1/q^{\prime}}\leq\left(  \int_{0}%
^{t}\left(  s^{\theta-\widetilde{\theta}}\right)  ^{q^{\prime}}\frac{ds}%
{s}\right)  ^{1/q^{\prime}}\leq ct^{\theta-\widetilde{\theta}},
\]
and
\[
\lim_{t\rightarrow0}\left\|  Pw(t)\int_{0}^{t}u(s)w(s)\frac{ds}{s}\right\|
_{H_{0}}\leq\lim_{t\rightarrow0}ct^{\theta-\widetilde{\theta}}\left|
Pw(t)\right|  =0.
\]
The corresponding limit when $t\rightarrow\infty$ can be handled by the same
argument if we first use the cancellation property $\int_{0}^{t}%
u(s)Pw(s)\frac{ds}{s}=-\int_{t}^{\infty}u(s)Pw(s)\frac{ds}{s}$ and then apply
the $H_{1}$ norm.

To see (\ref{cer2}) we note that%
\begin{align*}
&  \left|  Pw(t)\right|  \left\|  \int_{0}^{t}\left(  w(r)-Pw(r)\right)
\int_{0}^{r}u(s)\frac{ds}{s}\frac{dr}{r}\right\|  _{H_{0}}\\
&  \leq\left\|  w\right\|  _{W}\left|  Pw(t)\right|  \int_{0}^{t}%
J(s,u(s);\overline{H})\ln\frac{t}{s}\frac{ds}{s}\\
&  \leq\left\|  w\right\|  _{W}\left|  Pw(t)\right|  t^{\theta}\left(
\Phi_{\theta,q}(J(s,u(s);\overline{H}))\right)  t^{-\theta}\left(  \int
_{0}^{t}\left(  s^{\theta}\ln\frac{t}{s}\right)  ^{q^{\prime}}\frac{ds}%
{s}\right)  ^{1/q^{\prime}}.
\end{align*}
Now, the term on the right hand side converges to zero when $t\rightarrow0$ by
Lemma \ref{l1} and the fact that near zero, $t^{-\theta}\left(  \int_{0}%
^{t}\left(  s^{\theta}\ln\frac{t}{s}\right)  ^{q^{\prime}}\frac{ds}{s}\right)
^{1/q^{\prime}}\leq t^{-\theta}\left(  \int_{0}^{t}\left(  s^{\theta}\frac
{s}{t}\right)  ^{q^{\prime}}\frac{ds}{s}\right)  ^{1/q^{\prime}}\leq
ct^{-\theta}t^{-1}t^{1+\theta}.$ Again the case $t\rightarrow\infty$ is
reduced to the case $t\rightarrow0$ by a familiar argument using cancellations.
\end{proof}
\end{theorem}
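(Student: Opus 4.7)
The plan is to adapt the single-cancellation argument of \thmref{t1} by performing two successive integrations by parts, each consuming one of the two cancellation hypotheses. The heuristic is that the two factors of $Pw$ in the representation of $f$ should each get converted into a factor of $w^{\#}=Pw-w$, which is bounded by $\|w\|_{W}$, while the differentiation is shifted onto $u$ via nested primitives.

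First I would rewrite the integrand as
\[
u(s)(Pw(s))^{2}\frac{ds}{s}=Pw(s)\,d\!\left(\int_{0}^{s}u(r)Pw(r)\frac{dr}{r}\right)
\]
and integrate by parts. Using $(Pw)'(t)=-w^{\#}(t)/t$ and checking, as in \thmref{t1}, that the boundary terms at $0$ and $\infty$ vanish (this is where the second cancellation condition enters), one arrives at
\[
f=\int_{0}^{\infty}\left(\int_{0}^{t}u(s)Pw(s)\frac{ds}{s}\right)w^{\#}(t)\frac{dt}{t}.
\]
Next, I would integrate by parts inside the inner integral, writing $u(s)\frac{ds}{s}=d(\int_{0}^{s}u(r)\frac{dr}{r})$ and using the first cancellation together with the limit $Pw(s)\int_{0}^{s}u(r)\frac{dr}{r}\to 0$ already established in \thmref{t1}. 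Substituting back, one more integration by parts on the remaining $Pw(t)\int_{0}^{t}u(s)\frac{ds}{s}$ piece shows that the two pieces of the resulting expression coincide, yielding the representation
\[
f=\int_{0}^{\infty}U(t)\frac{dt}{t},\qquad U(t)=2\left(\int_{0}^{t}\left(\int_{0}^{r}u(s)\frac{ds}{s}\right)w^{\#}(r)\frac{dr}{r}\right)w^{\#}(t).
\]

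To bound $\|f\|_{\bar H_{\theta,q}}$ it then suffices to estimate $\Phi_{\theta,q}(J(t,U(t);\bar H))$. The $H_{0}$-component of $J(t,U(t))$ is directly majorized by $\|w\|_{W}^{2}\int_{0}^{t}J(r,u(r);\bar H)\ln(t/r)\frac{dr}{r}$; for the $H_{1}$-component, the second cancellation allows one to swap $\int_{0}^{t}$ for $-\int_{t}^{\infty}$ inside and convert the nested double integral into a single Volterra integral on $(t,\infty)$ weighted by $\ln(s/t)$. Two applications of Hardy's inequality then yield
\[
\Phi_{\theta,q}(J(t,U(t);\bar H))\leq c\,\|w\|_{W}^{2}\,\Phi_{\theta,q}(J(t,u(t);\bar H)),
\]
which is the claim.

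The main technical obstacle is verifying that the boundary terms produced by the three integrations by parts vanish at $0$ and $\infty$. By \lemref{l2}, $Pw\in W$, and so \lemref{l1} yields $|Pw(t)|\leq t^{-\widetilde\theta}$ for $t$ near $0$ (with $\widetilde\theta<\theta$) and a symmetric bound near $\infty$. Together with H\"older and Hardy, this gives $\|\int_{0}^{t}u(s)Pw(s)\,ds/s\|_{H_{0}}\leq c\,t^{\theta-\widetilde\theta}\Phi_{\theta,q}(J(\cdot,u;\bar H))$ and an analogous estimate for the nested double integral that appears at the next step, which forces the boundary terms at $0$ to be zero; the limits at $\infty$ are reduced to the preceding case by switching $\int_{0}^{t}$ to $-\int_{t}^{\infty}$ via the cancellation conditions and applying the $H_{1}$-norm. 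The case $q=1$ is handled by the same kind of direct bound used in \thmref{t1}, bypassing H\"older.
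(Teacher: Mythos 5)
Your proposal reproduces the paper's own argument step for step: the same chain of three integrations by parts ($f = \int Pw\,d(\int_0^t uPw\,ds/s)$, then the inner integral, then the $I_0$ piece to show $I_0=I_1$), the same good representation $f=\int_0^\infty U(t)\,dt/t$ with $U(t)=2(\int_0^t(\int_0^r u\,ds/s)\,w^\#(r)\,dr/r)\,w^\#(t)$, the same Volterra bounds $C_0\lesssim\|w\|_W\int_0^t J(r,u(r))\ln(t/r)\,dr/r$ and $C_1\lesssim\|w\|_W\int_t^\infty J(r,u(r))\ln(r/t)\,dr/r^2$ followed by two applications of Hardy, and the same use of Lemmas~\ref{l2} and~\ref{l1} to kill the boundary terms. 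One small imprecision: you attribute the conversion of the nested integral inside the $H_1$-norm into a Volterra integral on $(t,\infty)$ to ``the second cancellation,'' but the paper's computation of $C(\alpha)$ shows that \emph{both} hypotheses $\int_0^\infty u\,ds/s=0$ and $\int_0^\infty u\,Pw\,ds/s=0$ are needed to cancel the $Pw(\alpha)$-dependent terms and arrive at $C(\alpha)=\int_t^\infty u(s)\int_t^s w^\#(r)\,\frac{dr}{r}\,\frac{ds}{s}$; a single one would leave a divergent contribution as $\alpha\to0$.
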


\begin{corollary}
\label{c1} Let $\overline{H}$ be a Banach pair, and let $w\in W.$ Suppose
that
\[
f=\int_{0}^{\infty}u(s)\left(  w(s)\right)  ^{2}\frac{ds}{s},
\]
with
\begin{equation}
\int_{0}^{\infty}u(s)\frac{ds}{s}=0,\text{ }\int_{0}^{\infty}u(s)w(s)\frac
{ds}{s}=0,\text{ }\int_{0}^{\infty}u(s)Pw(s)\frac{ds}{s}=0; \label{cancela}%
\end{equation}
and
\[
\Phi_{\theta,q}(J(t,u(t);\overline{H}))<\infty.
\]
Then,
\[
f\in\overline{H}_{\theta,q},
\]
and, moreover,
\[
\left\|  f\right\|  _{\overline{H}_{\theta,q}}\leq c\left\|  w\right\|
_{W}^{2}\Phi_{\theta,q}(J(t,u(t);\overline{H})).
\]
\end{corollary}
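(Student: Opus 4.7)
The plan is to reduce Corollary~\ref{c1} to the already-proved Theorems~\ref{t1} and~\ref{t2} via the identity $w = Pw - w^{\#}$. Expanding the square gives
\[
w(s)^{2} = (Pw(s))^{2} - 2\,Pw(s)\, w^{\#}(s) + (w^{\#}(s))^{2},
\]
and inserting this into the representation of $f$ yields a splitting $f = f_{1} - 2 f_{2} + f_{3}$ with $f_{1}=\int_{0}^{\infty} u(s)(Pw(s))^{2}\frac{ds}{s}$, $f_{2}=\int_{0}^{\infty} u(s)\,Pw(s)\,w^{\#}(s)\frac{ds}{s}$, and $f_{3}=\int_{0}^{\infty} u(s)(w^{\#}(s))^{2}\frac{ds}{s}$. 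The strategy is to bound each piece by $c\|w\|_{W}^{2}\,\Phi_{\theta,q}(J(t,u(t);\overline{H}))$ and sum.

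The two side terms are easy. For $f_{3}$, since $(w^{\#})^{2}\in L^{\infty}$ with $\|(w^{\#})^{2}\|_{\infty}\le\|w\|_{W}^{2}$, the trivial $L^{\infty}$ bound for the operator $\Omega_{(w^{\#})^{2}}$ noted in the Introduction already gives what we need. For $f_{1}$, Corollary~\ref{c1} assumes exactly the two cancellations $\int u\,\frac{ds}{s}=0$ and $\int u\,Pw\,\frac{ds}{s}=0$ which are the hypotheses of Theorem~\ref{t2}, so that theorem applies verbatim.

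The interesting piece is $f_{2}$. I would absorb $w^{\#}$ into $u$ by setting $\tilde u(s)=u(s)w^{\#}(s)$; since $|w^{\#}|\le\|w\|_{W}$, one has $J(t,\tilde u(t);\overline{H})\le\|w\|_{W}\,J(t,u(t);\overline{H})$. The crucial observation is that the three hypotheses of Corollary~\ref{c1} conspire to give a first-order cancellation for $\tilde u$: using $w^{\#}=Pw-w$,
\[
\int_{0}^{\infty}\tilde u(s)\,\frac{ds}{s}=\int_{0}^{\infty} u(s)Pw(s)\,\frac{ds}{s}-\int_{0}^{\infty} u(s)w(s)\,\frac{ds}{s}=0.
\]
Since $Pw\in W$ with $\|Pw\|_{W}\le\|w\|_{W}$ by Lemma~\ref{l2}, Theorem~\ref{t1} applied to the representation $f_{2}=\int_{0}^{\infty}\tilde u(s)\,Pw(s)\,\frac{ds}{s}$ yields
\[
\|f_{2}\|_{\overline{H}_{\theta,q}}\le c\|Pw\|_{W}\,\Phi_{\theta,q}(J(t,\tilde u;\overline{H}))\le c\|w\|_{W}^{2}\,\Phi_{\theta,q}(J(t,u;\overline{H})).
\]
Adding the three estimates completes the proof. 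The only nontrivial step is spotting this algebraic decomposition and noticing that the ``extra'' cancellation $\int u\,w\,\frac{ds}{s}=0$ in (\ref{cancela}) is used precisely to make $\tilde u$ mean-zero so that Theorem~\ref{t1} becomes applicable; no new integrations by parts are needed beyond those already carried out in the proofs of Theorems~\ref{t1} and~\ref{t2}.
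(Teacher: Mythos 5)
Your proof is correct and matches the paper's approach: both decompose $u\,w^{2}$ into $u(Pw)^{2}$, a cross term, and a bounded $(w^{\#})^{2}$ term, handling the first by Theorem~\ref{t2}, the cross term by Theorem~\ref{t1} (using the cancellation $\int u(Pw-w)\,\frac{ds}{s}=0$ furnished by the last two conditions in (\ref{cancela})), and the last trivially via $\|w^{\#}\|_{\infty}\le\|w\|_{W}$. The paper reaches the same three-term split through the polarization identity $wPw=\tfrac12\bigl(w^{2}+(Pw)^{2}-(w-Pw)^{2}\bigr)$ and then applies Theorem~\ref{t1} with weight $w$ to $\tilde u=u(w-Pw)$, whereas your direct expansion of $(Pw-w^{\#})^{2}$ is a little cleaner and applies Theorem~\ref{t1} with weight $Pw$ to $\tilde u=uw^{\#}$, which costs you an extra appeal to Lemma~\ref{l2} but is otherwise the same calculation.
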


\begin{proof}
Write
\[
\int_{0}^{\infty}u(s)\left(  w(s)\right)  ^{2}\frac{ds}{s}=\int_{0}^{\infty
}u(s)\left(  w(s)-Pw(s)\right)  w(s)\frac{ds}{s}+\int_{0}^{\infty
}u(s)w(s)Pw(s)\frac{ds}{s}.
\]
Since $w(t)Pw(t)=\frac{\left(  w(t)\right)  ^{2}+\left(  Pw(t)\right)
^{2}-(w(t)-Pw(t))^{2}}{2},$ we have
\begin{align*}
\int_{0}^{\infty}u(s)\left(  w(s)\right)  ^{2}\frac{ds}{s}  &  =2\int
_{0}^{\infty}u(s)\left(  w(s)-Pw(s)\right)  w(s)\frac{ds}{s}-\int_{0}^{\infty
}u(s)\left(  w(s)-Pw(s)\right)  ^{2}\frac{ds}{s}\\
&  +\int_{0}^{\infty}u(s)\left(  Pw(s)\right)  ^{2}\frac{ds}{s}%
\end{align*}

We now show how to control each of these terms. Let $\tilde{u}(s)=u(s)\left(
w(s)-Pw(s)\right)  ,$ by the cancellation conditions (\ref{cancela}) it
follows that $\int_{0}^{\infty}\tilde{u}(s)\frac{ds}{s}=0.$ Therefore we can
apply Theorem \ref{t1} to conclude that $\int_{0}^{\infty}\tilde
{u}(s)w(s)\frac{ds}{s}$ $\in\overline{H}_{\theta,q}.$ It follows that%
\[
\left\|  2\int_{0}^{\infty}u(s)\left(  w(s)-Pw(s)\right)  w(s)\frac{ds}%
{s}\right\|  _{\overline{H}_{\theta,q}}\leq c\left\|  w\right\|  _{W}^{2}%
\Phi_{\theta,q}(J(t,u(t);\overline{H})).
\]
The second term is also under control since $\left(  w(s)-Pw(s)\right)  ^{2}$
is bounded. Finally we may apply Theorem \ref{t2} to control the remaining term.
\end{proof}

\begin{theorem}
Let $\overline{H}$ be a Banach pair, and let $w_{0},w_{1}\in W.$ Suppose that
\[
f=\int_{0}^{\infty}u(s)w_{0}(s)w_{1}(s)\frac{ds}{s},
\]
with
\[
\int_{0}^{\infty}u(s)\frac{ds}{s}=0,\text{ }\int_{0}^{\infty}u(s)w_{j}%
(s)\frac{ds}{s}=0,\text{ }\int_{0}^{\infty}u(s)Pw_{j}(s)\frac{ds}%
{s}=0\text{\ \ }(j=0,1)
\]
and
\[
\Phi_{\theta,q}(J(t,u(t);\overline{H}))<\infty.
\]
Then,
\[
f\in\overline{H}_{\theta,q}%
\]
and, moreover,%
\[
\left\|  f\right\|  _{\overline{H}_{\theta,q}}\leq c\max\{\left\|
w_{0}\right\|  _{W},\left\|  w_{1}\right\|  _{W}\}^{2}\Phi_{\theta
,q}(J(t,u(t);\overline{H})).
\]
\end{theorem}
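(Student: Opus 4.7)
My proposal is to reduce this bilinear statement to the diagonal case already handled in Corollary \ref{c1} by means of a polarization identity. The key observations are that $W$ is a linear space with $\|w_0+w_1\|_W \leq \|w_0\|_W + \|w_1\|_W$, that the operator $P$ is linear, and that all three cancellation conditions in the hypothesis are linear in the weight. This suggests that there is no real need to redo the integration-by-parts calculation of Theorem \ref{t2}; a symmetry argument should suffice.

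Concretely, the plan is to start from the polarization identity
\[
4 w_0(s)w_1(s) = (w_0(s)+w_1(s))^2 - (w_0(s)-w_1(s))^2,
\]
and to split
\[
f = \tfrac{1}{4}\int_0^\infty u(s)(w_0(s)+w_1(s))^2\frac{ds}{s} - \tfrac{1}{4}\int_0^\infty u(s)(w_0(s)-w_1(s))^2\frac{ds}{s}.
\]
Next I would verify that each of the two summands falls under the hypotheses of Corollary \ref{c1}. For $v = w_0 \pm w_1 \in W$ the three cancellation conditions
\[
\int_0^\infty u(s)\tfrac{ds}{s}=0,\qquad \int_0^\infty u(s) v(s)\tfrac{ds}{s}=0,\qquad \int_0^\infty u(s)(Pv)(s)\tfrac{ds}{s}=0
\]
all follow by linearity from the hypotheses in the statement (using $P(w_0\pm w_1) = Pw_0 \pm Pw_1$ for the third one).

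Applying Corollary \ref{c1} to each of the two pieces then gives
\[
\|f\|_{\bar H_{\theta,q}} \leq c\bigl(\|w_0+w_1\|_W^2 + \|w_0-w_1\|_W^2\bigr)\Phi_{\theta,q}(J(t,u(t);\bar H)),
\]
and by the triangle inequality in $W$ each $\|w_0\pm w_1\|_W^2 \leq 4\max\{\|w_0\|_W,\|w_1\|_W\}^2$, which yields the desired bound after absorbing the constant.

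I do not expect a serious obstacle: the only point that needs to be checked carefully is the preservation of the cancellation conditions under the linear combinations $w_0\pm w_1$, and this is immediate from the linearity of both the integration-against-$u$ functional and of $P$. If one insisted on avoiding polarization, one could alternatively adapt the integration-by-parts chain from the proof of Theorem \ref{t2} directly to the product $Pw_0\cdot Pw_1$, using the two separate cancellation identities $\int u Pw_j\,\tfrac{ds}{s}=0$ at the appropriate stages; this route would re-derive the same bound but at the cost of repeating all the delicate limit arguments (\ref{cer1})--(\ref{cer2}) with two weights in parallel, which is precisely what the polarization shortcut avoids.
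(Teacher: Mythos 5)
Your proposal is correct and takes essentially the same approach as the paper: the paper also reduces to Corollary~\ref{c1} via a polarization identity, using $w_0 w_1 = \tfrac{1}{2}\bigl((w_0+w_1)^2 - w_0^2 - w_1^2\bigr)$ and applying the corollary to $w_0+w_1$, $w_0$, $w_1$, whereas you use $4w_0 w_1 = (w_0+w_1)^2 - (w_0-w_1)^2$ and apply it to $w_0+w_1$, $w_0-w_1$. The two polarizations are interchangeable, and in both cases the cancellation hypotheses for the combined weights follow by the same linearity observation you make.
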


\begin{proof}
Write
\[
w_{0}(s)w_{1}(s)=\frac{(w_{0}(s)+w_{1}(s))^{2}-w_{0}(s)^{2}-w_{1}(s)^{2}}{2}%
\]
and apply Corollary \ref{c1}.
\end{proof}

For $n>2$ we proceed by induction and we obtain

\begin{theorem}
Let $\overline{H}$ be a Banach pair, and let $w\in W$.

(i) Suppose that
\[
f=\int_{0}^{\infty}u(s)\left(  Pw(s)\right)  ^{n}\frac{ds}{s},
\]
with
\[
\int_{0}^{\infty}u(s)w(s)^{k}\frac{ds}{s}=0,\text{ }\int_{0}^{\infty
}u(s)Pw(s)^{k}\frac{ds}{s}=0,\text{ \ \ }(k=0,\cdots,n-1);
\]
and
\[
\Phi_{\theta,q}(J(t,u(t);\overline{H}))<\infty.
\]
Then,
\[
f\in\overline{H}_{\theta,q},
\]
and, moreover,%
\[
\left\|  f\right\|  _{\overline{H}_{\theta,q}}\leq c\Phi_{\theta
,q}(J(t,u(t);\overline{H})).
\]
(ii) If
\[
f=\int_{0}^{\infty}u(s)\left(  w(s)\right)  ^{n}\frac{ds}{s},
\]
with
\[
\int_{0}^{\infty}u(s)w(s)^{k}\frac{ds}{s}=0,\text{ }\int_{0}^{\infty
}u(s)Pw(s)^{k}\frac{ds}{s}=0,\text{ }\int_{0}^{\infty}u(s)w(s)^{n-k}%
Pw(s)^{k}\frac{ds}{s}=0,\text{ \ ,}(k=0,\cdots,n-1);
\]
and
\[
\Phi_{\theta,q}(J(t,u(t);\overline{H}))<\infty,
\]
then,
\[
f\in\overline{H}_{\theta,q},
\]
and, moreover,
\[
\left\|  a\right\|  _{\overline{H}_{\theta,q}}\leq c\left\|  w\right\|
_{W}^{n}\Phi_{\theta,q}(J(t,u(t);\overline{H})).
\]
\end{theorem}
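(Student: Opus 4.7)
The plan is induction on $n$, with base cases $n=2$ supplied by Theorem~\ref{t2} for (i) and Corollary~\ref{c1} for (ii). For the inductive step of (i), the strategy is to perform $n$ successive integrations by parts, generalizing the two IBPs carried out in the proof of Theorem~\ref{t2}. Write $f = \int_0^\infty Pw(s)\,dV_{n-1}(s)$ where $V_k(s) := \int_0^s u(r)(Pw(r))^k\,dr/r$. The hypothesis $\int u(Pw)^{n-1}\,ds/s = 0$ gives $V_{n-1}(\infty) = 0$; the $s=0$ boundary term vanishes by the $\theta$-estimates used in Theorems~\ref{t1} and~\ref{t2}, applied with the observation that $P^k w \in W$ for every $k\geq 0$ (iterating Lemma~\ref{l2}, so Lemma~\ref{l1} applies). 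Using $dPw = -w^\#\,ds/s$ produces $f = \int_0^\infty V_{n-1}(s)\,w^\#(s)\,ds/s$.

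Another IBP inside $V_{n-1}$ yields the recursion $V_k(s) = Pw(s)V_{k-1}(s) + \int_0^s V_{k-1}(r)w^\#(r)\,dr/r$. Substituting and iterating the coincidence $I_0 = I_1$ from the proof of Theorem~\ref{t2}---each term with an extra factor of $Pw$ equals, after another IBP, its companion in which that factor becomes a further nested $w^\#$ integration---I expect to arrive at a representation
\[
f = c_n \int_0^\infty w^\#(s_n) \int_0^{s_n} w^\#(s_{n-1}) \cdots \int_0^{s_2} w^\#(s_1)\,V_0(s_1)\,\frac{ds_1}{s_1}\cdots\frac{ds_n}{s_n},
\]
where $V_0(s) = \int_0^s u(r)\,dr/r$ and $c_n$ is a combinatorial constant; each of the $n$ IBPs consumes one cancellation hypothesis to kill a boundary term. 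To estimate $\|f\|_{\overline{H}_{\theta,q}}$, set $U_0 := V_0$ and $U_k(s) := \int_0^s U_{k-1}(r)w^\#(r)\,dr/r$. Inductively $U_k(\infty) = 0$; combining the forward bound on $H_0$ with the cancellation form $U_k(s) = -\int_s^\infty U_{k-1}(r)w^\#(r)\,dr/r$ on $H_1$, together with $\|w^\#\|_\infty \leq \|w\|_W$, gives
\[
J(t, U_k(t)) \leq \|w\|_W \left(\int_0^t J(r, U_{k-1}(r))\,\frac{dr}{r} + t\int_t^\infty J(r, U_{k-1}(r))\,\frac{dr}{r^2}\right).
\]
Hardy's inequality then gives $\Phi_{\theta,q}(J(\cdot, U_k)) \leq c\|w\|_W\,\Phi_{\theta,q}(J(\cdot, U_{k-1}))$; iterating $n$ times and combining with the Hardy bound $\Phi_{\theta,q}(J(\cdot, V_0)) \leq c\,\Phi_{\theta,q}(J(\cdot, u))$ (as in the proof of Theorem~\ref{t1}) produces $\|f\|_{\overline{H}_{\theta,q}} \leq c\|w\|_W^n \Phi_{\theta,q}(J(t, u(t); \overline{H}))$.

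For part (ii), expand $w = Pw - w^\#$ binomially: $w^n = \sum_{k=0}^n \binom{n}{k}(-w^\#)^{n-k}(Pw)^k$. Each summand has the form $\int \tilde u_k (Pw)^k\,ds/s$ with $\tilde u_k := u(-w^\#)^{n-k}$ and $J(t, \tilde u_k) \leq \|w\|_W^{n-k}J(t, u(t))$. The cancellations needed to apply part (i) to $\tilde u_k$ with exponent $k$ unfold, via $(-w^\#)^{n-k} = (Pw - w)^{n-k}$, into linear combinations of the mixed cancellation hypotheses $\int u\,w^a (Pw)^b\,ds/s = 0$ from (ii); summing the resulting bounds yields the claim. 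The principal obstacle is the bookkeeping in the iterated IBP of part (i)---tracking the combinatorial constant $c_n$ and verifying that every boundary term at both $0$ and $\infty$ vanishes throughout the $n$ nested integrations by parts---which amounts to iterating the Theorem~\ref{t2} coincidence $n-1$ further times, each layer consuming one cancellation hypothesis and relying on the growth bound $|Pw(t)| \leq c\|w\|_W(1+|\log t|)$ from the proof of Lemma~\ref{l1} to dominate endpoint terms.
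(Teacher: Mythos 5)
Your part (i) outline faithfully iterates the integration-by-parts mechanism of the proof of Theorem~\ref{t2}: the recursion $V_k(s) = Pw(s)V_{k-1}(s) + \int_0^s V_{k-1}(r)w^\#(r)\frac{dr}{r}$, the repeated coincidence analogous to $I_0=I_1$, the verification that $U_k(\infty)=0$ consumes one cancellation hypothesis at each level, the vanishing of the boundary terms via the logarithmic growth bound on $Pw$, and the Hardy iteration are all the right ingredients, and they produce the constant $c_n = n!$ that you anticipate (one can check $c_2=2$, $c_3=6$). This is exactly the ``proceed by induction'' that the paper invokes for this theorem without supplying any further detail, so for part (i) the approaches agree.

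Part (ii), however, has a genuine gap. After the binomial expansion $w^n=\sum_k\binom{n}{k}(-w^\#)^{n-k}(Pw)^k$, applying part (i) to the $k$-th term with $\tilde u_k = u(-w^\#)^{n-k}$ requires $\int \tilde u_k\,w^j\,\frac{ds}{s}=\int \tilde u_k\,(Pw)^j\,\frac{ds}{s}=0$ for $j<k$, and expanding $(-w^\#)^{n-k}=(Pw-w)^{n-k}$ turns these into linear combinations of mixed moments $\int u\,w^a(Pw)^b\,\frac{ds}{s}$ with $a+b<n$. You assert these all follow from the stated hypotheses, but they do not: already for $n=3$, $k=2$ one needs $\int u\,w\,Pw\,\frac{ds}{s}=0$, and this degree-$2$ mixed moment is not determined by the hypotheses --- from $\int u\,w^2=\int u\,(Pw)^2=0$ and the identity $wPw=\tfrac12\bigl(w^2+(Pw)^2-(w^\#)^2\bigr)$ one gets only $\int u\,w\,Pw=-\tfrac12\int u\,(w^\#)^2=\int u\,w\,Pw$, a tautology; the listed degree-$n$ conditions $\int u\,w^{n-k}(Pw)^k=0$ do not reach down to degree $2$. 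The $n=2$ case (Corollary~\ref{c1}) sidesteps this precise difficulty by using that algebraic identity to \emph{eliminate} the unwanted cross term rather than requiring it to vanish; your part (ii) needs an analogous algebraic device for $n\geq 3$, or else the hypotheses must be enlarged to include all mixed moments of total degree below $n$. (Note also that the $k=0$ case of the third hypothesis reads $\int u\,w^n\,\frac{ds}{s}=0$, i.e.\ $f=0$, so the theorem's hypotheses cannot be literally as printed; but even under the natural reading with $k\geq1$ the gap above persists.)
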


\begin{remark}
In the classical case (cf. \cite{mr}, theorem 3) $w(t)=\ln t,$ and therefore
$Pw(t)=\ln t-1.$ Consequently the conditions
\[
\int_{0}^{\infty}u(s)Pw(s)^{k}\frac{ds}{s}=0,\int_{0}^{\infty}u(s)w(s)^{n-k}%
Pw(s)^{k}\frac{ds}{s}=0,\,(k=0,\cdots,n-1);
\]
actually follow from
\[
\int_{0}^{\infty}u(s)\left(  w(s)\right)  ^{k}\frac{ds}{s}=0,\;(k=0,\cdots
,n-1).
\]
\end{remark}

\section{Higher order commutators}

We consider higher order commutators defined as follows (cf. \cite{mr},
\cite{bmr}, \cite{rr}). Let $\bar{X}$ and $\bar{Y}$ be Banach pairs, and let
$T:\bar{X}\rightarrow$ $\bar{Y}$ be a bounded linear operator. Given a nearly
optimal representation (cf. \ref{intro} above)%
\[
f=\int_{0}^{\infty}u_{f}(s)\frac{ds}{s}%
\]
we let
\[
\Omega_{n,w}f=\frac{1}{n!}\int_{0}^{\infty}u_{f}(s)(w(s))^{n}\frac{ds}%
{s},\,n=0,1,...
\]
and form the commutators%
\[
C_{n,w}f=\left\{
\begin{array}
[c]{cc}%
Tf & ,\text{ }n=0\\
\left[  T,\Omega_{1,w}\right]  f & ,\ n=1\\
\left[  T,\Omega_{2,w}\right]  f-\Omega_{1,w}(C_{1,w}f) & ,\text{ }n=2\\
............ & \\
\left[  T,\Omega_{n,w}\right]  f-\Omega_{1,w}(C_{n-1,w}f)-\cdots\Omega
_{n-1,w}(C_{1,w}f) &
\end{array}
\right.
\]
Observe that the commutators $\left[  T,\Omega_{n,w}\right]  $ alone are not
bounded and we need to form more complicated expressions like $C_{n,w}$ in
order to produce the necessary cancellations. Moreover, since the operations
$\Omega_{j,w}$ are not linear, simple minded iterations of the form
$\Omega_{1,w}\left[  T,\Omega_{1,w}\right]  -\left[  T,\Omega_{1,w}\right]
\Omega_{1,w},etc,$ cannot be treated directly using Theorem \ref{teoA}.

\begin{theorem}
Suppose that $w\in W.$ Then the commutators $C_{n,w}$ are bounded, $C_{n,w}$
$:\bar{X}_{\theta,q}\rightarrow\bar{Y}_{\theta,q},$ $0<\theta<1,1\leq
q\leq\infty,$ and, moreover, for each instance $g=w,$ or $g=Pw,$ we have
\[
\left\|  C_{n,w}f\right\|  _{\bar{Y}_{\theta,q}}\leq c\left\|  T\right\|
_{\bar{X}\rightarrow\bar{Y}}\left\|  w\right\|  _{W}^{n}\left\|  f\right\|
_{\bar{X}_{\theta,q}}.
\]

\begin{proof}
We only consider in detail the case $n=2.$ Writing $w=(w-Pw)+Pw,$ we see that
we only need to deal with the commutator $C_{2,Pw}$. Let
\[
u(s)=T(u_{f}(s))-u_{T(f)}(s)
\]
then
\[
C_{2,Pw}(Tf)=\frac{1}{2}\int_{0}^{\infty}u(t)(Pw(t))^{2}\frac{dt}{t}-\int
_{0}^{\infty}\widetilde{u}(t)Pw(t)\frac{dt}{t},
\]
with
\[
\int_{0}^{\infty}\widetilde{u}(t)\frac{dt}{t}=\int_{0}^{\infty}u(t)Pw(t)\frac
{dt}{t};\int_{0}^{\infty}u(t)\frac{dt}{t}=0,
\]
and
\[
\Phi_{\theta,q}(J(t,\widetilde{u}(t),\overline{X}))\leq c\left\|  w\right\|
_{W}\left\|  f\right\|  _{\bar{X}_{\theta,q}}%
\]%

\[
\Phi_{\theta,q}(J(t,u(t),\overline{X}))\leq c\left\|  w\right\|  _{W}\left\|
f\right\|  _{\bar{X}_{\theta,q}}%
\]

Since%
\begin{align*}
\frac{1}{2}\int_{0}^{\infty}u(t)(Pw(t))^{2}\frac{dt}{t}  &  =\frac{1}{2}%
\int_{0}^{\infty}(Pw(t))^{2}d\left(  \int_{0}^{t}u(s)\frac{ds}{s}\right) \\
&  =\int_{0}^{\infty}\left(  \int_{0}^{t}u(s)\frac{ds}{s}\right)
Pw(t)w^{\#}(t)\frac{dt}{t},
\end{align*}
it follows that if we let%
\[
v(t)=(\int_{0}^{t}u(s)\frac{ds}{s})w^{\#}(t)
\]
then
\[
C_{2,Pw}(Tf)=\int_{0}^{\infty}(v(t)-\widetilde{u}(t))Pw(t)\frac{dt}{t},
\]
and%
\[
\int_{0}^{\infty}(v(t)-\widetilde{u}(t))\frac{dt}{t}=0.
\]
then theorem \ref{t1} implies that
\begin{align*}
\left\|  C_{2,Pw}(Tf)\right\|  _{\bar{Y}_{\theta,q}}  &  \leq c\left\|
w\right\|  _{W}\Phi_{\theta,q}(J(t,u(t);\bar{X}))+c\Phi_{\theta,q}%
(J(t,\widetilde{u}(t);\bar{X}))\\
&  \leq c\left\|  w\right\|  _{W}^{2}\left\|  f\right\|  _{\bar{X}_{\theta,q}%
}.
\end{align*}
as we wished to show.
\end{proof}
\end{theorem}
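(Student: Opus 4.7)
The plan is to proceed by induction on $n$. The base case $n=1$ is Theorem~\ref{teoA}. For the inductive step, suppose that $C_{k,w}$ is bounded with $\left\|C_{k,w}g\right\|_{\bar{Y}_{\theta,q}}\leq c\left\|T\right\|\left\|w\right\|_{W}^{k}\left\|g\right\|_{\bar{X}_{\theta,q}}$ for every $k<n$. Writing $w=(w-Pw)+Pw$ and expanding $w^{j}=((w-Pw)+Pw)^{j}$ binomially in each occurrence of $\Omega_{j,w}$ in the definition of $C_{n,w}$, every contribution carrying a factor $(w-Pw)^{i}$ with $i\geq 1$ is controlled by trivial $L^{\infty}$ estimates, since $w-Pw=-w^{\#}\in L^{\infty}$ with norm at most $\left\|w\right\|_{W}$. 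This reduces the problem to proving the bound for $C_{n,Pw}$.

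Set $u(s)=T(u_{f}(s))-u_{Tf}(s)$, so that $\int_{0}^{\infty}u(s)\,\frac{ds}{s}=0$ and $\Phi_{\theta,q}(J(t,u;\bar{Y}))\leq c\left\|T\right\|\left\|f\right\|_{\bar{X}_{\theta,q}}$. By the inductive hypothesis, $C_{n-k,Pw}f\in\bar{Y}_{\theta,q}$ for each $1\leq k\leq n-1$, so I may choose a nearly optimal representation $\phi_{k}$ of $C_{n-k,Pw}f$ with $\Phi_{\theta,q}(J(t,\phi_{k};\bar{Y}))\leq c\left\|T\right\|\left\|w\right\|_{W}^{n-k}\left\|f\right\|_{\bar{X}_{\theta,q}}$. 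Unpacking the definition gives
\[
C_{n,Pw}(Tf)=\frac{1}{n!}\int_{0}^{\infty}u(s)(Pw(s))^{n}\,\frac{ds}{s}-\sum_{k=1}^{n-1}\frac{1}{k!}\int_{0}^{\infty}\phi_{k}(s)(Pw(s))^{k}\,\frac{ds}{s}.
\]
I would now perform $n-1$ successive integrations by parts on the leading term, using $(Pw)'(t)=-w^{\#}(t)/t$ to lower the exponent of $Pw$ by one at each step, producing at each stage an inner iterated integral and an extra factor of $w^{\#}$. After each step I would combine the resulting $(Pw)^{n-j}$--integral with the matching $\phi_{n-j}(Pw)^{n-j}$ term in the sum. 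Lemmas~\ref{l1} and~\ref{l2} (which give $Pw\in W$ and the correct decay of $Pw$ at $0$ and $\infty$), together with the Hardy-type estimates on the inner integrals used in the proof of Theorem~\ref{t1}, ensure that all boundary terms vanish. After $n-1$ iterations I arrive at
\[
C_{n,Pw}(Tf)=\int_{0}^{\infty}G(t)Pw(t)\,\frac{dt}{t}
\]
for an explicit $G$ built out of iterated integrals of $u$ and the $\phi_{k}$ against powers of $w^{\#}$. Unwinding the recursive definition $C_{n-k,Pw}=[T,\Omega_{n-k,Pw}]-\sum_{j=1}^{n-k-1}\Omega_{j,Pw}(C_{n-k-j,Pw})$ at each stage yields the key cancellation $\int_{0}^{\infty}G(t)\,\frac{dt}{t}=0$, after which Theorem~\ref{t1} applied with the weight $Pw\in W$ gives $\left\|C_{n,Pw}(Tf)\right\|_{\bar{Y}_{\theta,q}}\leq c\left\|w\right\|_{W}\Phi_{\theta,q}(J(t,G;\bar{Y}))$. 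A final round of Hardy's inequalities and the inductive bounds on $\Phi_{\theta,q}(J(t,\phi_{k};\bar{Y}))$ supply the remaining factor of $\left\|w\right\|_{W}^{n-1}$.

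The hard part will be the combinatorial bookkeeping: checking at each intermediate stage both that the boundary terms genuinely vanish and that the partial cancellations line up so that the final integrand $G$ has zero total mass. This is cleanest to phrase as an induction within the induction, in which at the $j$-th integration by parts one identifies the relevant telescoping remainder and, at the final step $j=n-1$, matches it against $\sum_{k}C_{n-k,Pw}f$ so that the $\phi_{k}$-contributions cancel exactly what the iterated integration by parts has produced. An alternative route is to combine all integrands into a single representation of the form $\int v(s)(Pw(s))^{n}\,\frac{ds}{s}$ satisfying the $n$ cancellation conditions required by Theorem~\ref{t2} and its higher-order analogs in Section~3, but this still requires essentially the same verification.
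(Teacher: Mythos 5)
Your proposal follows the same route as the paper: reduce to the weight $Pw$, represent $C_{n,Pw}$ as a sum of $\int u(s)(Pw(s))^{n}\frac{ds}{s}$ and terms coming from representations of the lower-order $C_{n-k,Pw}f$, integrate by parts using $(Pw)'(t)=-w^{\#}(t)/t$ so that each step lowers the power of $Pw$ while producing a bounded $w^{\#}$ factor, exploit the built-in cancellation $\int(v-\widetilde{u})\frac{dt}{t}=0$, and close with Theorem~\ref{t1}. The paper only carries this out for $n=2$; your explicit induction on $n$ is the natural formalization and is the right scaffolding for the general case, even if you rightly flag the bookkeeping as unchecked.

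One step in your sketch is stated incorrectly, however. In the reduction from $C_{n,w}$ to $C_{n,Pw}$ you write that after the binomial expansion of $w^{j}=((w-Pw)+Pw)^{j}$, every contribution carrying a factor $(w-Pw)^{i}$ with $i\geq1$ is controlled by trivial $L^{\infty}$ estimates because $w-Pw=-w^{\#}\in L^{\infty}$. That is false for the genuine cross terms: a contribution with integrand $u(s)(w-Pw)^{i}(s)(Pw(s))^{j-i}$ and $1\leq i<j$ still carries the unbounded factor $(Pw)^{j-i}$, so boundedness of $w^{\#}$ alone gives nothing. These pieces must be handled with the same cancellation machinery, either by absorbing the $w^{\#}$ powers into the representing function and invoking the mixed-power representation results (Theorem~\ref{t2}, Corollary~\ref{c1} and their higher-order analogues in Section~3), or by showing that the cross terms reorganize into lower-order commutators $C_{m,Pw}$ ($m<n$) plus genuinely $L^{\infty}$-controlled errors, which is where the inductive hypothesis would actually be used. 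The paper is equally terse about this reduction, so you are in good company, but the phrase ``trivial $L^{\infty}$ estimates'' overstates what is available and should be replaced by an appeal to the cancellation structure.
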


\section{Comparison with earlier results and some questions}

This paper was originally conceived in 1999-2000, when the first named author
spent one year in the Tropics. So publication was delayed somewhat and in the
mean time several papers on the subject have appeared. In particular,
\cite{mf} has similar statements framed in terms of weights of the form%
\begin{equation}
w(t)=\phi(\log t),\text{ with }\phi\text{ Lipchitz.} \label{forma}%
\end{equation}
One recognizes that these weights are included in our theory since for $w$ of
the form (\ref{forma}) we have (cf. Lemma \ref{l3} above)%
\[
\left\|  w\right\|  _{W_{1}}=\sup\left|  tw^{\prime}(t)\right|  =\left\|
\phi^{\prime}\right\|  _{\infty}<\infty.
\]
There is also a connection with \cite{bmr} (a longer version of this paper was
originally circulated in 1996 (cf. \cite{bmr1})). These papers emphasize the
connection between weighted norm inequalities, commutators and BMO type
conditions using the $K-$method, and $BMO$ conditions are formulated in terms
of properties of weights. Recall that for the $K-$method of interpolation we
define the corresponding $\Omega$ operations by%
\[
\Omega^{K}f=\int_{0}^{1}x_{0}(t)\frac{dt}{t}-\int_{1}^{\infty}x_{1}%
(t)\frac{dt}{t},
\]
or, more generally, by%
\[
\Omega_{w}^{K}f=\int_{0}^{1}x_{0}(t)w(t)\frac{dt}{t}-\int_{1}^{\infty}%
x_{1}(t)w(t)\frac{dt}{t},
\]
where%
\[
f=x_{0}(t)+x_{1}(t),\text{ and }\left\|  x_{0}(t)\right\|  _{H_{0}}+t\left\|
x_{1}(t)\right\|  _{H_{0}}\leq cK(t,f;\bar{H}).
\]
Using the strong form of the fundamental lemma of interpolation theory (cf.
\cite{cjm}) we can arrange to have $f=\int_{0}^{\infty}u_{f}(s)\frac{ds}{s},$
and%
\[
x_{0}(t)=\int_{0}^{t}u_{f}(s)\frac{ds}{s},x_{1}(t)=\int_{t}^{\infty}%
u_{f}(s)\frac{ds}{s}.
\]
It formally follows that%
\[
\Omega_{w}^{K}f=-\Omega_{Gw}f,
\]
where%
\[
Gw(s)=\int_{1}^{s}w(r)\frac{dr}{r}.
\]
In particular, if $w=1,$ then $Gw(s)=\log s$. Also note that%
\[
\sup_{s}\left|  s(Gw)^{\prime}(s)\right|  =\left\|  w\right\|  _{\infty}.
\]

Now a brief attempt to informally connect our work with Dan Waterman's
classical Fourier analysis. One source of inspiration for the formulation of
some of the results in this paper comes from the Littlewood-Paley theory,
framed in terms of semigroups, e.g. as developed in Stein \cite{st}. In the
abstract theory of Stein \cite{st} (cf. \cite{st} pag 121) the relevant
semigroups are represented, using the spectral theorem, by%
\[
T^{t}=\int_{0}^{\infty}e^{-\lambda t}dE(\lambda),
\]
and one considers (multiplier) operators of the form%
\[
T_{w}f=\int_{0}^{\infty}e^{-\lambda t}w(t)dE(\lambda)f,
\]
with $w\in L^{\infty}.$ The conclusion is that the operator $T_{(Lw)^{\prime}%
}$ is bounded on $L^{p},1<p<\infty,$ where%
\[
Lw(\lambda)=\int_{0}^{\infty}e^{-\lambda t}w(t)dt.
\]
We hope to come back to explore this subject elsewhere.

We conclude with a few suggestions for future explorations on related topics.

T1. One can formulate iterations of the operation $\#$ (cf. \cite{cws}) and
ask for its relevance in the study of higher order commutators.

T2. Despite several results (cf. \cite{cjmr}, \cite{bmr1}, \cite{rr}) one
feels that the duality theory associated to the $\Omega$ operators is still
not well developed. In particular, in \cite{bmr1} a predual $H$ of the space
$W$ is constructed but the consequences have not been explored.

T3. Incidentally we note that the duality theory for the interpolation spaces
introduced in \cite{ckmr} has not been studied.

T4. Compactness is a natural issue that has not been considered so far in
abstract theory of commutators. For example, it is an important known result
that commutators of CZO and functions in $VMO$ generate compact operators on
$L^{p}$ (cf. \cite{uch})$.$ We believe that the framework proposed in this
paper could be useful to formulate the corresponding abstract result. In
particular, one can define an appropriate analog of $VMO.$..

T5. In connection with T3 and T4 it would be of interest to study compactness
(weak compactness) in the abstract setting of \cite{joma} using the ideas in
this paper.

\end{document}